\title{Canonical filtrations on Harish-Chandra modules}
\author{Ivan Losev}
\newcommand{\g}{\mathfrak{g}}
\newcommand{\C}{\mathbb{C}}
\newcommand{\gr}{\operatorname{gr}}
\newcommand{\U}{\mathcal{U}}
\newcommand{\kf}{\mathfrak{k}}
\newcommand{\V}{\operatorname{V}}
\newcommand{\Nilp}{\mathcal{N}}
\newcommand{\Orb}{\mathbb{O}}
\newcommand{\HC}{\operatorname{HC}}
\newcommand{\Walg}{\mathcal{W}}
\newcommand{\Weyl}{\mathsf{A}}
\newcommand{\wHC}{\operatorname{wHC}}
\newcommand{\A}{\mathcal{A}}
\newcommand{\Supp}{\operatorname{Supp}}
\newcommand{\B}{\mathcal{B}}
\newcommand{\Hom}{\operatorname{Hom}}
\newcommand{\J}{\mathcal{J}}
\newcommand{\slf}{\mathfrak{sl}}
\newcommand{\Z}{\mathbb{Z}}
\newtheorem{Thm}{Theorem}[section]
\newtheorem{Prop}[Thm]{Proposition}
\newtheorem{Lem}[Thm]{Lemma}
\theoremstyle{definition}
\newtheorem{Ex}[Thm]{Example}
\newtheorem{defi}[Thm]{Definition}
\newtheorem{Rem}[Thm]{Remark}
\numberwithin{equation}{section}
\begin{document}
\begin{abstract}
The goal of this paper is to show that a wide class of  Harish-Chandra $(\g,K)$-modules including all irreducible ones come with 
a certain canonical filtration.
\end{abstract}
\maketitle
\section{Introduction}
Let $G$ be a semisimple algebraic group over $\C$ and $\g$ be its Lie algebra.
Let $\U$ denote the universal enveloping algebra $U(\g)$.

Fix an involution $\sigma$ of $\g$ and let $\kf$ be the fixed point locus $\g^\sigma$.
Let $K$ denote the corresponding connected algebraic subgroup of $G$ (one can deal with a more general choice of $K$
but for the purposes of the introduction we are not going to do this).
Then we can talk about Harish-Chandra (shortly, HC) $(\g,K)$-modules, i.e., finitely generated $\U$-modules, where the $\kf$-action integrates to $K$. Below we will only consider those HC modules
that have finite length. The category of such modules will be denoted by $\HC(\U,K)$.

Let $M$ be  HC $(\g,K)$-module. An important tool to study $M$ are {\it good filtrations} on $M$, i.e.,
$K$-stable $\U$-module filtrations such that $\gr M$ is finitely generated over $S(\g)$
(in fact, $\gr M$ is then an $S(\g/\kf)$-module). In general, there is no canonical
choice of such a filtration. The goal of this paper is to produce one under certain
additional condition on $M$ that, in particular, is satisfied by all irreducible modules.

In order to state the definition, we need to recall the notion of the associated variety of
a HC module. The support of $\gr M$ in $(\g/\kf)^*$ is independent of the choice of a good filtration
on $M$. It is called the {\it associated variety} of $M$ and is denoted by $\V(M)$.
Since we assume that $M$ has finite length, the associated variety $\V(M)$ is contained
in the nilpotent cone $\Nilp_K\subset (\g/\kf)^*$ and so is the union of finitely
many $K$-orbits. If $M$ is irreducible, then $\overline{G \V(M)}$ is the closure of a single
nilpotent orbit, it is the set of zeroes of the ideal $\gr \operatorname{Ann}_{\U}(M)$. For a $K$-stable closed subvariety $Y\subset \Nilp_K$, we write
$\HC_Y(\U,K)$ for the full subcategory of $\HC(\U)$ consisting of all $M$
such that $\V(M)\subset Y$. If $Y=X\cap (\g/\kf)^*$, where $X$ is a closed $G$-stable subset
in the nilpotent cone of $\g^*$, then we write $\HC_X(\U,K)$ for $\HC_Y(\U,K)$.

Now fix a nilpotent orbit $\Orb\subset \g^*$. Consider the full (but not abelian) subcategory
$\HC^\partial_\Orb(\U,K)\subset \HC_{\overline{\Orb}}(\U,K)$ consisting of all objects that do not
have nonzero submodules with associated variety contained in $\overline{\Orb}\setminus \Orb$.
For example, every simple HC module $M$ belongs to exactly one category
$\HC^\partial_{\Orb}(\U,K)$, here $\overline{\Orb}=\overline{G\V(M)}$.

Our goal is to equip $M$ with a canonical good filtration. The characterization only involves
the $S(\g)$-module $\gr M$ and has three conditions. Here are the first two conditions:
\begin{itemize}
\item[(a)] The annihilator $I$ of $\gr M$ in $S(\g)$ is radical.
\item[(b)] $\gr M$ has {\it depth at least 1} meaning that every element in $S(\g)/I$ that is not
a zero divisor in the algebra is not a zero divisor in $\gr M$ either.
\end{itemize}
The third condition involves a certain normalization. Take a $K$-equivariant graded
finitely generated $S(\g/\kf)$-module $N$ (an example
is provided by $\gr M$). Take a point $\chi$ in an open $K$-orbit
in $\V(M)$. Then the fiber $N_\chi$ is a finite dimensional module over the stabilizer
$K_\chi$ of $\chi$ in $K$. It turns out it comes also with a preferred choice of a grading.  Namely,
pick a nondegenerate invariant symmetric form $(\cdot,\cdot)$ on $\g$ and use it to identify
$\g$ with $\g^*$. Let $e\in \g^*$ be the element of $\g$ corresponding to $\chi$.
We include it into an $\slf_2$-triple $(e,h,f)$ with $h\in \kf$ and $f\in \kf^\perp$.
We note that $h$ is uniquely determined up to $K_\chi$-conjugacy. Let $\gamma:\C^\times\rightarrow G$
be the one-parameter subgroup with $d_1\gamma(1)=h$. In particular, the image lies in $K$.
Then the $\C^\times$-action on $\g^*$
given by $t.\alpha=t^{-2}\gamma(t)\alpha$ fixes $\chi$ and so gives a $\C^\times$-action (equivalently,
a grading) on $N_\chi$ to be denoted by $N_\chi=\bigoplus_{i} N_{\chi,i}$.

We impose the following normalization condition:

\begin{itemize}
\item[(c)] We have $(\gr M)_{\chi,i}=\{0\}$ for $i\neq 0,1$.
\end{itemize}

We note that this condition is independent of the choice of $h$ (different choices are conjugate by $K_\chi$ and so the resulting gradings are conjugate) and of the choice of $\chi$ up to the $K$-conjugacy.
So, we have one condition (c) for each $K$-orbit in $\V(M)\cap \Orb$.

Here is the main result of this paper.

\begin{Thm}\label{Thm:main}
Let $M\in \HC^\partial_\Orb(\U,K)$. There is a unique good filtration on $M$ (to be called canonical) such that (a) and (b) satisfied, while (c) is satisfied for all $K$-orbits in $\V(M)\cap \Orb$.
\end{Thm}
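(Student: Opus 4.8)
My plan is to pin down $\gr M$ first --- showing that (a), (b), (c) leave it no freedom --- and only then recover the filtration itself; existence is obtained by constructing the ``generic part'' of $\gr M$ via the finite $W$-algebra attached to $\Orb$ and then performing a boundary correction. Set $\Orb^\circ:=\Orb\cap(\g/\kf)^*$. Since $M$ has finite length, $\V(M)$ is equidimensional of dimension $\tfrac{1}{2}\dim\Orb$, so each of its irreducible components is the closure of a single $K$-orbit $K\chi_j\subset\Orb^\circ$, and on $\bigsqcup_j K\chi_j$ every $K$-equivariant coherent sheaf is the vector bundle $\bigsqcup_j K\times_{K_{\chi_j}}N_{\chi_j}$ induced from its fibers. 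Let $\Walg=\Walg(\g,e)$ be the $W$-algebra with its Kazhdan filtration, so that $\gr\Walg=\C[S]$ for the Slodowy slice $S$, a connected non-negatively graded algebra all of whose coordinates have Kazhdan degree $\geq 2$, in particular $\C[S]_1=\{0\}$; and let $\bullet_\dagger$ be the slice-restriction functor to finite-dimensional $(\Walg,K_\chi)$-modules, which kills exactly the modules supported on $\overline\Orb\setminus\Orb$, is compatible with Rees modules, and sends a good filtration $F$ on $M$ to a good filtration $F_\dagger$ on $M_\dagger$ with $\gr_{F_\dagger}M_\dagger$ identified, as a Kazhdan-graded $K_\chi$-module, with the fiber $(\gr_F M)_\chi$, and with $\gr_F M|_{\Orb^\circ}\cong K\times_{K_\chi}\gr_{F_\dagger}M_\dagger$.

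First I would analyze the conditions. Condition (a) says $\Ann_{S(\g)}\gr_F M$ cuts out $\V(M)$ with its reduced structure; condition (b) is equivalent to $\gr_F M$ having no embedded associated primes, i.e. no nonzero $S(\g)$-submodule supported on $\overline\Orb\setminus\Orb$. Together they force $\gr_F M$ to be a ``boundary-free'' extension of its restriction to $\Orb^\circ$, and since that restriction is the vector bundle with fibers the $(\gr_F M)_{\chi_j}$, the module $\gr_F M$ is already determined by those fibers viewed as $K_{\chi_j}$-modules. Next, $\gr_F M\cong M$ as a $K$-module, so the $K$-character of $\gr_F M$ is fixed; comparing $K$-characters (the fibers are the leading asymptotics of the $K$-character of $M$ along the $K\chi_j$) pins down each $(\gr_F M)_{\chi_j}$ as a $K_{\chi_j}$-module. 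Finally, because $\C[S]_1=\{0\}$, condition (c) makes $\gr_{F_\dagger}M_\dagger$ a trivial $\C[S]$-module concentrated in Kazhdan degrees $0$ and $1$, and together with the $K$-character constraint this fixes the $\C^\times$-grading on each $(\gr_F M)_{\chi_j}$ as well. The upshot is that $\gr_F M$, as a graded $K$-equivariant $S(\g)$-module, depends only on $M$, not on the admissible $F$.

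For existence I would work in two stages. In the first stage I construct a good filtration on $M$ whose restriction to $\Orb^\circ$ is the bundle just described: starting from an arbitrary good filtration on $M$, transport it to $M_\dagger$, replace it there by the unique normalized good filtration with $\gr$ in Kazhdan degrees $0$ and $1$ (available precisely because $\C[S]_1=\{0\}$), and lift this modification back to $M$ via the Rees-module compatibility of $\bullet_\dagger$; the obstruction to lifting vanishes because $\bullet_\dagger$ is exact up to $\hbar$-torsion and up to modules supported over the boundary, neither of which intervenes for $M\in\HC^\partial_\Orb$. In the second stage I correct the resulting filtration so that (a) and (b) hold: by the analysis above the failure of (a) and (b) is concentrated over $\overline\Orb\setminus\Orb$, so it is removed by a finite sequence of elementary (Hecke-type) modifications of the Rees lattice supported over the boundary; such modifications do not alter the restriction to $\Orb^\circ$, hence preserve (c), and --- since $M$ has no nonzero submodule supported over the boundary --- they do not change $M$ either, while Noetherianity guarantees termination. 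The filtration produced is the canonical one.

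Finally, uniqueness: if $F,F'$ both satisfy (a), (b), (c), then by the second paragraph $\gr_F M\cong\gr_{F'}M$ as graded $K$-modules and both are the (maximal) boundary-free extension of the normalized bundle over $\Orb^\circ$; comparing the Rees lattices of $F$ and $F'$ through $F\cap F'$ and $F+F'$ --- using the exact sequence $0\to\gr_{F\cap F'}M\to\gr_F M\oplus\gr_{F'}M\to\gr_{F+F'}M\to 0$ and the fact that $F\cap F'$ and $F+F'$ again restrict to the normalized data over $\Orb^\circ$ --- forces the $K$-isotypic dimensions of $F_{\le i}M$ and $F'_{\le i}M$ to agree for every $i$, hence $F=F'$. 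The step I expect to be the main obstacle is the first stage of the existence argument: showing that the normalized filtration on the slice genuinely lifts to a good filtration on $M$. This is exactly where one needs the fine compatibility of slice restriction with Rees modules rather than merely with underlying modules, and where the hypothesis $M\in\HC^\partial_\Orb$ does essential work.
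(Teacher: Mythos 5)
Your overall architecture matches the paper's: classify good filtrations up to a ``boundary'' equivalence by their restrictions to the slices at the open $K$-orbits, use $\C[S]_1=\{0\}$ to see that condition (c) forces the trivial (normalized) filtration on each $M_{\dagger,\chi^i}$, and then obtain (a), (b) by a saturation at the boundary. But the proposal has a genuine gap exactly at the point you flag as secondary. The step ``such modifications \dots are removed by a finite sequence of elementary modifications \dots while Noetherianity guarantees termination'' is where the whole difficulty of the theorem sits, and Noetherianity does \emph{not} give termination: the boundary modifications produce an increasing chain of graded $K$-stable lattices $M_\hbar=M_\hbar^0\subset M_\hbar^1\subset\cdots$ inside $M[\hbar^{\pm 1}]$, and $M[\hbar^{\pm 1}]$ is not Noetherian as a $\U_\hbar$-module (the chain $\hbar^{-i}M_\hbar$ already fails to stabilize), so an ascending chain of lattices need not terminate. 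Termination is precisely where the hypothesis $M\in\HC^\partial_\Orb(\U,K)$ must enter --- not in the lifting step, which works for arbitrary $M$. The paper's route is: (i) show that for every boundary point $\chi\in\V(M)\setminus\Orb$ the module $M_{\dagger,\chi}$ has no nonzero finite-dimensional $\Walg$-submodules (this uses the classification of finite-dimensional $\Walg$-modules with a given annihilator and the compatibility of $\bullet_{\dagger,\chi}$ with $\Hom$ from HC bimodules --- it is not formal); (ii) show that a non-terminating chain of fd-comparable lattices produces, via an inverse limit $K_\hbar=\bigcap_i\hbar^iN_\hbar^i$, exactly such a finite-dimensional submodule; (iii) run an induction on the codimension of the boundary $K$-orbits to reduce to the fd-comparable case. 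None of this is replaced by anything in your plan, and without it the ``canonical'' lattice could fail to exist (indeed, if $M$ has a submodule supported on $\partial\Orb$, no filtration satisfying (b) exists, so some use of the hypothesis is unavoidable here).

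Two smaller points. First, the claim that the $K$-character of $M$ pins down the fibers $(\gr M)_{\chi^j}$ as $K_{\chi^j}$-modules is unjustified (only the class in an appropriate Grothendieck group is filtration-independent) and also unnecessary: condition (c) together with $\C[S]_{1}=\{0\}$ already forces the induced filtration on $M_{\dagger,\chi^j}$ to be the trivial one, hence the fiber to be $M_{\dagger,\chi^j}$ itself, which depends only on $M$. Second, in the uniqueness argument, ``the $K$-isotypic dimensions of $F_{\leqslant i}M$ and $F'_{\leqslant i}M$ agree, hence $F=F'$'' is not a valid inference (equal dimensions without a containment do not force equality). The clean argument is: $F$ and $F'$ lie in the same comparability class by (c) and the classification; if $F'\not\subset F$, then $(F+F')\cap\hbar^{-1}F$ is a comparable lattice strictly between $F$ and $\hbar^{-1}F$, so its image in $\hbar^{-1}F/F\cong\gr M$ is a nonzero submodule with support of dimension $<\dim\V(M)$, contradicting (a)--(b); by symmetry $F=F'$. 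This direction, unlike existence, needs no termination argument.
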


We will also see that the canonical filtrations are functorial.

\begin{Ex}\label{Ex:sl2}
Let $\g=\mathfrak{sl}_2$ and $\kf$ be its Cartan subalgebra. Pick numbers $z\in \C, a\in \Z$, and consider the HC module $M:=\U/\U(C-z,h-a)$, where $C$ is the Casimir element. It comes with the filtration induced from the usual filtration on $\U$. We claim that this filtration is canonical. The associated graded is
$\gr M=\C[e,f]/(ef)$, it satisfies conditions (a) and (b).
Now we check (c). There are two $K=\C^\times$-orbits in $\Orb\cap (\g/\kf)^*$: they are $(e\neq 0, f=0)$ and $(e=0, f\neq 0)$. The grading on $\gr M$ coming from the Kazhdan action associated to the former
has $\deg e=0, \deg f=4$. So the fiber of $(\gr M)$ at $e=1,f=0$ is a one-dimensional space in degree $0$.
The same conclusion holds for the other $K$-orbit. This establishes (c) for all orbits. So this filtration is canonical.
\end{Ex}

Using the canonical filtration (or any good filtration satisfying (b)) we will prove the following result conjectured by Vogan. Let $\V(M)^{\geqslant 2}$ denote the complement in $\V(M)$
to the union of $K$-orbits of dimension at most $\dim \V(M)-2$.

\begin{Thm}\label{Thm:codim1_connected}
Let $M$ be an irreducible HC $(\g,\kf)$-module, i.e., an irreducible $\g$-module, where $\kf$
acts locally finitely. Then the variety $\V(M)^{\geqslant 2}$ is connected.
\end{Thm}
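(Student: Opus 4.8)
The plan is to reduce the statement to a combinatorial connectedness question for $\V(M)$ and then attack it through a good filtration satisfying (b). First I would handle the degenerate cases: if $\dim\V(M)\leqslant 1$ then no $K$-orbit has dimension $\leqslant\dim\V(M)-2$, so $\V(M)^{\geqslant 2}=\V(M)$, a nonempty cone, hence connected; so assume $d:=\dim\V(M)\geqslant 2$. I recall that for irreducible $M$ the variety $\V(M)$ is equidimensional of dimension $d=\tfrac12\dim\Orb$ (purity of the associated variety; this is also forced by (b) together with the fact that $\V(M)\subseteq\overline{\Orb}\cap(\g/\kf)^*$ and the latter is a union of Lagrangian subvarieties of $\overline{\Orb}$). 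Since every $K$-orbit and every top-dimensional $K$-orbit closure is irreducible — so remains connected after deleting a lower-dimensional closed subset — connectedness of $\V(M)^{\geqslant 2}$ is equivalent to connectedness of the incidence graph $\Gamma$ whose vertices are the top-dimensional $K$-orbits in $\V(M)$ together with the codimension-one $K$-orbits in $\V(M)$, a top orbit $C$ being joined to a codimension-one orbit $\Orb'$ exactly when $\Orb'\subseteq\overline{C}$; by equidimensionality every codimension-one orbit lies in the closure of some top orbit, so it is not isolated in $\Gamma$.

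Next I would fix the canonical good filtration on $M$ (Theorem \ref{Thm:main}), or any $K$-stable good filtration satisfying (b), and read off that $\gr M$ has no embedded associated primes: (b) says a non-zero-divisor of $S(\g)/I$ is a non-zero-divisor on $\gr M$, so by prime avoidance every associated prime of $\gr M$ lies in, hence equals, a minimal prime of $I$; equivalently $\Gamma_Y(\gr M)=0$ for every $K$-stable closed $Y\subseteq\V(M)$ of codimension $\geqslant 1$, where $\Gamma_Y(-)$ is the submodule of sections supported on $Y$. Suppose for contradiction that $\Gamma$ is disconnected; partition its vertices into two nonempty parts and let $Y_1,Y_2\subseteq\V(M)$ be the $K$-stable closed cones given by the union of the orbit closures in each part. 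Then $Y_1\cup Y_2=\V(M)$, while $Y_1\cap Y_2$, containing no top-dimensional and no codimension-one orbit, has codimension $\geqslant 2$. Put $N_a:=\Gamma_{Y_a}(\gr M)$. Since $\gr M$ has no embedded primes, $N_1\cap N_2=\Gamma_{Y_1\cap Y_2}(\gr M)=0$; on the other hand $N_1\neq 0$ because $Y_1$ contains the closure of a top orbit and a minimal prime of $I$ is an associated prime of $\gr M$, and $\gr M/N_1\neq 0$ because it is nonzero at the generic point of a top orbit contained in $Y_2$. Thus $N_1$ is a nonzero, proper, graded, $K$-stable $S(\g)$-submodule of $\gr M$, and $N_1\oplus N_2$ embeds into $\gr M$ with cokernel supported on a $K$-stable closed cone of codimension $\geqslant 2$.

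The essential and hardest step is to turn this into a contradiction with the irreducibility of $M$. The naive hope is that $N_1$ equals the symbol module $\gr M'$ of a genuine $\U$-submodule $M'\subseteq M$, which would then be nonzero and proper; but this fails in general, since the symbol of a product may jump in filtration degree, so an arbitrary graded $S(\g)$-submodule of $\gr M$ need not come from a submodule of $M$. (Accordingly, depth $\geqslant 1$ by itself does not force $\V(M)^{\geqslant 2}$ to be connected: Hartshorne's example of two planes meeting at a point is reduced, satisfies $S_1$, yet has disconnected punctured spectrum — so the $\U$-module structure must be used, not just $\gr M$.) To repair this I would pass to the Rees module $\mathcal M=\bigoplus_i(F_iM)t^i$ over the Rees algebra $\mathcal R=\bigoplus_i(F_i\U)t^i$, flat over $\C[t]$ with $\mathcal M/t\mathcal M=\gr M$ and $\mathcal M[t^{-1}]\cong M[t,t^{-1}]$, and try to propagate the near-splitting $N_1\oplus N_2\hookrightarrow\gr M$ to a splitting of $\mathcal M$ over the complement of the codimension-$\geqslant 2$ ``bad locus'' (which sits over nilpotent orbits strictly smaller than $\Orb$), and then argue that the resulting idempotent descends, using the $\C^\times$-grading, to an idempotent of $\mathcal M$, hence of $M$.

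The obstruction to this propagation is a first local cohomology group $H^1$ along the bad locus: condition (b) yields the vanishing of $H^0$ there but not of $H^1$, so the missing vanishing has to be supplied from the quantization. I would supply it by reducing, via Losev's $\dagger$-functor attached to the nilpotent orbits of the form $G\Orb'$ (which are of codimension two in $\Orb$ in the generic case), to the analogous connectedness assertion for a Harish-Chandra module $M_{\dagger}$ over the $W$-algebra $\Walg(\g,e')$ whose associated variety is a curve inside the two-dimensional symplectic transverse slice $\overline{\Orb}\cap S_{e'}$ — here one needs that the filtration condition (b) is compatible with $\dagger$ — and then treating that reduced situation either by a direct analysis of curves on a symplectic surface singularity or by induction on $\dim\Orb$. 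So the main obstacle I foresee is exactly this last point: upgrading the depth-one estimate to a depth-two estimate along the codimension-two locus, which genuinely requires the noncommutative ($\U$- or $\Walg$-module) structure and cannot be seen on $\gr M$ alone.
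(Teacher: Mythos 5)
You have the right starting point --- a $K$-stable good filtration that is maximal in its comparability class, the observation that condition (b) forces $\gr M$ to have no embedded primes, and the correct diagnosis (via Hartshorne's two-planes example) that the $\U$-module structure must enter --- but the argument stops exactly where the proof has to begin. You yourself flag the passage from the near-splitting $N_1\oplus N_2\hookrightarrow \gr M$ to a contradiction with irreducibility as ``the main obstacle,'' and the route you sketch for it (propagating a splitting of the Rees module over the complement of the bad locus, descending an idempotent, which requires an $H^1$-vanishing/depth-two estimate you acknowledge you cannot supply, to be produced by an unspecified induction through W-algebra restrictions) is neither carried out nor, as far as I can see, necessary. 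As written this is a genuine gap, not a proof.

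The actual argument (the paper follows Vogan's Theorem 4.6) needs no idempotent and no $S_2$ or $H^1$ input. One microlocalizes $M$ itself over the complement of $X_2$ and of the codimension-$\geqslant 2$ locus, obtaining a sheaf $M^0$ supported near $X_1$ and carrying a complete, separated filtration with associated graded $(\gr M)^0$. The only commutative input is that $\gr M$, having no embedded primes and equidimensional support, is Cohen--Macaulay at every point of codimension $\leqslant 1$ of its support; the non-Cohen--Macaulay locus, being closed and $K$-stable, therefore sits in the codimension-$\geqslant 2$ locus, so $(\gr M)^0$ is maximal Cohen--Macaulay and its pushforward across a codimension-$2$ complement is coherent. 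Consequently $\gr\Gamma(M^0)\hookrightarrow\Gamma((\gr M)^0)$ is finitely generated and $\Gamma(M^0)$ is a HC module whose associated variety lies in $\overline{X}_1$. The noncommutative input is then only simplicity of $M$: the canonical map $M\rightarrow\Gamma(M^0)$ is nonzero, hence injective, which is absurd since $\V(M)\not\subset\overline{X}_1$. In short, one does not need to split $M$ --- a single nonzero $\U$-module map out of $M$ into a HC module with strictly smaller associated variety already contradicts irreducibility, and the depth-one condition supplied by Theorem \ref{Thm:max_filtration} is exactly what makes the target finitely generated. I recommend you replace your idempotent strategy by this one-sided microlocalization argument.
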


The paper is organized as follows. The construction of the canonical filtration is crucially
based on the restriction functors for HC modules sketched in \cite[Section 6.1]{Wdim}
and elaborated in \cite[Section 5]{LY}. These functors will be recalled in
Section \ref{S_res_fun}. Section \ref{S_canon_filtr} is the main part of the paper, there we
construct canonical filtrations and establish their properties.

{\bf Acknowledgements}: I am grateful to Pavel Etingof and Victor Ginzburg for stimulating discussions.
My work has been partially supported by the NSF under grant DMS-2001139.

\section{Restriction functors}\label{S_res_fun}
In this section we will briefly recall the construction of the restriction functor
for HC modules following \cite[Section 6.1]{Wdim}. 
This functor goes from a suitable category of HC modules
to the category of modules over the W-algebra $\Walg$.
So, we first recall the W-algebras, Section \ref{SS_W_alg},
and then discuss a general setting for HC modules, Section \ref{SS_wHC}.
After that we recall the restriction functor and its properties, Section \ref{SS_res_fun_constr}.

\subsection{W-algebras}\label{SS_W_alg}
Let $\sigma$ be an involution of $\g$. Set $\kf:=\g^\sigma$. 
Take a nilpotent $G$-orbit $\Orb\subset \g^*$ whose intersection with $(\g/\kf)^*$ is non-empty 
and a $(G^\sigma)^\circ$-orbit $\Orb_K\subset \Orb\cap (\g/\kf)^*$.
Pick $\chi\in \Orb_K$ and consider the transverse Slodowy slice $S\subset \g^*$ through $\chi$.
We can and will assume that $S$ is stabilized by $\theta:=-\sigma:\g^*\rightarrow \g^*$.
The slice $S$ is stable with respect to the $\C^\times$-action recalled before Theorem
\ref{Thm:main}. The grading on $\C[S]$ induced by this action is positive. Also it is easy to
see that
\begin{equation}\label{eq:Slodowy_vanish1}
\C[S]_1=\{0\}.
\end{equation}

There is a natural Poisson bracket of degree $-2$ on $\C[S]$.
So $\C[S]$ is a graded Poisson algebra. It admits a filtered quantization $\Walg$.
This algebra was introduced by Premet in \cite{Premet}.
The definition we are using originates from \cite{Walg}, a more technical version is
given in  \cite[Section 2.2]{W_prim}. Let
$\U_\hbar$ denote the Rees algebra of $\U$ for the PBW filtration. We rescale the grading by $2$ so that
$\g$ and $\hbar$ are now in degree $2$. The anti-involution $\theta=-\sigma$ of $\g$
extends to a $\C[\hbar]$-linear anti-involution of $\U_\hbar$ again denoted by $\theta$. We can view $\chi$ as a homomorphism
$\U_\hbar\rightarrow \C$ and consider the completion $\U_\hbar^{\wedge_\chi}$
at the kernel of $\chi$. A standard argument shows that it is a flat $\U_\hbar$-module.
It inherits the anti-involution $\theta$ from $\U_\hbar$.

Set $V:=T_\chi\Orb$, this is a symplectic vector space with form $\omega$.
It is $\C^\times$-stable and also
stable under the anti-involution $\theta$ of $\g$. Consider its Weyl algebra $\Weyl$
and its homogenized version $\Weyl_\hbar=T(V)[\hbar]/([u,v]-\hbar \omega(u,v))$. Thanks to the decomposition $\g^*=T_\chi\Orb\oplus T_\chi S$,
we can view $V$ as a subspace in $(T_\chi \g^*)^*$. Then we can $\C^\times$- and $\theta$-equivariantly
lift this embedding to an embedding $V\hookrightarrow \U_\hbar^{\wedge_\chi}$ subject to the following condition:
\begin{itemize}
\item it lifts to an algebra homomorphism $\Weyl_\hbar^{\wedge_0}\hookrightarrow \U_\hbar^{\wedge_\chi}$, where $\bullet^{\wedge_0}$ stands for the completion at the maximal ideal of $0$.
\end{itemize}
Let $\Walg_\hbar'$ denote the centralizer of the image of $V$
in $\U_\hbar^{\wedge_\chi}$. Note that we have a decomposition
\begin{equation}\label{eq:algebra_decomposition}
\U_\hbar^{\wedge_\chi}=\Weyl_\hbar^{\wedge_0}\widehat{\otimes}_{\C[[\hbar]]}\Walg_\hbar',
\end{equation}
where $\widehat{\otimes}$ is the completed tensor product.

The algebra $\Walg_\hbar'$ comes with a $\C^\times$-action.
Let $\Walg_\hbar$ denote the $\C^\times$-finite part. Finally, set $\Walg:=\Walg_\hbar/(\hbar-1)\Walg_\hbar$. The algebra $\Walg$ is called the
{\it finite W-algebra}, it quantizes $\C[S]$ in the following sense: the grading on $\Walg_\hbar$
induces an ascending filtration $\Walg=\bigcup_{i\geqslant 0}\Walg_{\leqslant i}$, and we have a graded Poisson algebra isomorphism $\gr \Walg\xrightarrow{\sim}\C[S]$. We note that by the construction,
$\Walg$ comes with a {\it parity involution}, to be denoted by $\varsigma$, that preserves the filtration
and acts on $\C[S]=\gr\Walg$ as $-1\in \C^\times$. This is because $\hbar$ has degree $2$
for the $\C^\times$-action.

Also note that the algebra $\Walg$ (as a filtered algebra with a parity involution) only depends on $\Orb$ up to an isomorphism preserving these structures (the algebras for points $\chi,g\chi$ are isomorphic
via the action of $g$).

%In particular, we can consider the category $\Walg\operatorname{-mod}^{\pm 1}$
%of finitely generated $\Walg$-modules equivariant with respect to the action of $\{\pm 1\}$,
%in what follows the objects of this category are going to be called {\it parity modules}. For
%$N\in\Walg\operatorname{-mod}^{\pm 1}$ we can talk about {\it parity good filtrations}, i.e.,
%good filtrations (module filtrations compatible with the filtration on the algebra such that the %associated graded is finitely generated over $\C[S]$) that are preserved by the action of $\{\pm 1\}$ %such that the element $-1\in \{\pm 1\}$ acts as the corresponding element of $\C^\times$ on $\gr M$.

Also, $\Walg$ inherits the anti-involution $\theta$, it preserves the filtration and commutes with
$\varsigma$. This structure depends on $\Orb_K$ and not just on $\Orb$.

\subsection{Weakly HC modules}\label{SS_wHC}
Now we introduce a category of weakly HC $\U_\hbar$-modules from \cite[Section 2]{LY}.
More generally, let $\A_\hbar$ be a $\C[\hbar]$-algebra that is equipped with a
$\C[\hbar]$-linear anti-involution $\theta$. We assume that $A:=\A_\hbar/(\hbar)$
is commutative. Let $A^{-\theta}$ denote the $-1$-eigenspace for $\theta$. Consider the two-sided ideal
$\mathcal{J}_\hbar\subset \A_\hbar$ defined as the preimage of $AA^{-\theta}$.
This is the Lie subalgebra in $\U_\hbar$ with respect to the bracket given by $[a,b]_\hbar:=\hbar^{-1}(ab-ba)$.

Following \cite[Section 2.3]{LY}, by a {\it weakly HC} $\A_\hbar$-module we mean a finitely generated $\A_\hbar$-module $M_\hbar$
equipped, in addition, with a Lie algebra action of $\mathcal{J}_\hbar$, $(a,m)\mapsto a.m$,
subject to the following conditions:
\begin{enumerate}
		\item $am=\hbar(a.m)$, $\forall\, a \in \mathcal{J}_\hbar$.
		\item $(ba).m = b(a.m)$, $\forall\, a \in \mathcal{J}_\hbar, b \in \A_\hbar$.
		\item $a.(bm)=[a,b]_\hbar m+b(a.m), \forall a\in \mathcal{J}_\hbar, b\in \A_\hbar$.
\end{enumerate}

By a homomorphism of  weakly HC modules we mean an $\A_\hbar$-linear map
that intertwines the actions of $\mathcal{J}_\hbar$. The resulting category will be
denoted by $\wHC(\A_\hbar)$.

If $\A_\hbar$ is graded in such a way that $\deg \hbar=d$ and $\theta$ preserves the grading,
it makes sense to speak about graded weakly HC modules: the action of $\mathcal{J}_\hbar$
is of degree $-d$. The category of graded weakly HC modules will be denoted by $\wHC^{gr}(\A_\hbar)$.

Here is the most important special case. The anti-involution $\theta$ of $U(\g)$ lifts to
a graded $\C[\hbar]$-linear involution of $\U_\hbar$. Then $A^{-\theta}=S(\g)\kf$ and $\mathcal{J}_\hbar=\U_\hbar\kf+\hbar \U_\hbar$.
Let $M$ be a HC $(\g,\kf)$-module, meaning a finitely generated $U(\g)$-module with locally finite action of
$\kf$. Equip it with a good $\kf$-stable filtration.
Then its Rees module $R_\hbar(M)$ is weakly HC:
note that for every $a\in \mathcal{J}_\hbar$ and every $m\in R_\hbar(M)$ we have
$am\in \hbar M_\hbar$ and we define $a.m$ by $\hbar^{-1}(am)$: since $R_\hbar(M)$
is flat over $\C[\hbar]$, the division makes sense. Conversely, if $M_\hbar$ is a graded weakly HC
module over $\U_\hbar$, then $M_\hbar/(\hbar-1)M_\hbar$ is a HC $(\g,\kf)$-module. The grading on
$M_\hbar$ induces a good filtration on $M_\hbar/(\hbar-1)M_\hbar$.

We can apply this construction to the Lie algebra $\g\times \g^{opp}$ and the anti-involution
given by $(x,y)\mapsto (y,x)$. The resulting weakly HC modules will be referred to as weakly
HC $\U_\hbar$-bimodules. Note that they form a monoidal category, while $\wHC^{gr}(\U_\hbar)$
is its module category.

Another special case of $\A_\hbar$ is $\Walg_\hbar$ with its anti-involution $\theta$.

\subsection{Construction and properties}\label{SS_res_fun_constr}
Let $K$ denote a connected algebraic group with Lie algebra $\kf$ and a homomorphism 
to $G$ that induces the identity automorphism of $\kf$. Fix a character $\kappa$ of $K$
Let $\wHC^{gr}(\U_\hbar)^{K,\kappa}$ denote the category of {\it strongly $K$-equivariant} objects in
$\wHC^{gr}(\U_\hbar)$. By definition, this means that, for every $\xi\in \kf$, the operator 
$m\mapsto \xi m-\langle\kappa,\xi\rangle m$ coincides with the image of $\xi$ under the differential of
the $K$-action. Similarly, we can talk about HC $(\g,K,\kappa)$-modules, their category 
will be denoted by $\HC(\g,K,\kappa)$. Note that in Introduction 
we considered the situation when $K\hookrightarrow G$ and $\kappa=0$.

Now we proceed to the  {\it restriction functor} $\wHC^{gr}(\U_\hbar)^{K,\kappa}\rightarrow
\wHC^{gr}(\Walg_\hbar)$ to be denoted by $\bullet_{\dagger,\chi}$ (first sketched in \cite[Section 6.1]{Wdim}, see also \cite[Section 5.4]{LY}). It is constructed as
follows, compare to \cite[Section 5.4]{LY}.

Take $M_\hbar\in \wHC^{gr}(\U_\hbar)^{K,\kappa}$. Consider
its completion
$$M_\hbar^{\wedge_\chi}\cong \U_\hbar^{\wedge_\chi}\otimes_{\U_\hbar}R_\hbar(M).$$
We view $M_\hbar^{\wedge_\chi}$ as a module over the right hand side of (\ref{eq:algebra_decomposition}).

One can equip $M_\hbar^{\wedge_\chi}$ with a $\C^\times$-action as follows. Let $\iota$
denote the homomorphism $K\rightarrow G$. Set $d:=2|\ker\iota|$.
Then $dh/2\in \kf$ is the differential at $1$ of a one-parameter subgroup $\C^\times\rightarrow K$ to be denoted by $\gamma$ (one can take $d=2$ if $K\hookrightarrow G$).
Then we can define an action of $\C^\times$ on $M_\hbar$ by $t.m:=t^{di}\gamma(t)^{-1}m$
for $m\in M_{\hbar}$ of degree $i$. Note that the corresponding action of $\C^\times$ on $\g^*$
preserves $\chi$, and so extends to $M_\hbar^{\wedge_\chi}$. The latter becomes a $\C^\times$-equivariant
$\U_\hbar^{\wedge_\chi}$-module, where the $\C^\times$-action on $\U_\hbar^{\wedge_\chi}$
is rescaled $d/2$ times (so that $\hbar$ is in degree $d$).

One can show, \cite[Lemma 5.2]{LY}, that $M_\hbar^{\wedge_\chi}$ decomposes as $\C[[L,\hbar]]\widehat{\otimes}_{\C[[\hbar]]}N'_\hbar$, where
$L$ is the $-1$-eigenspace for $\theta$ in $T_\chi G\chi$, a lagrangian subspace,
and $N'_\hbar$ is the annihilator of the Lie algebra action of $L$ on $M_\hbar^{\wedge_\chi}$.
Taking the locally finite elements for the action of $\C^\times$ in $N'_\hbar$ we get
an object in $\wHC^{gr}(\Walg_\hbar)$ (where the grading on $\Walg_\hbar$
is rescaled $d/2$ times) to be denoted by $(M_\hbar)_{\dagger,\chi}$,
giving us the required functor.

Note that if $M_\hbar$ is flat over $\C[\hbar]$, then so is $N_\hbar$, as all operations above
preserve the flatness. Now we can take an object $M\in \HC(\U)^{K,\kappa}$. Equip it with a good filtration,
and take the Rees module $M_\hbar$. Rescale the grading $d$ times. We get an object
$M_{\hbar,\dagger,\chi}\in \wHC^{gr}(\Walg_\hbar)$. Consider the quotient
$M_{\hbar,\dagger,\chi}/(\hbar-1)M_{\hbar,\dagger,\chi}$. Since $\hbar$ has degree $d$,
we get a $\Z/d\Z$-grading on the quotient. Rescale the $\Z/2\Z$-grading on $\Walg$ $d/2$ times
so that only the components of degrees $0$ and $d/2$. Denote the category of $\Z/d\Z$-graded
modules over $\Walg$ by $\Walg\operatorname{-mod}^{\Z/d\Z}$. Similarly to \cite[Section 3.4]{HC},
$\wHC^{gr}(\U_\hbar)^{K,\kappa}\rightarrow \wHC^{gr}(\Walg_\hbar)$ descends to
a functor $\HC(\U)^{K,\kappa}\rightarrow\Walg\operatorname{-mod}^{\Z/d\Z}$.

%So $\bullet_{\dagger,\chi}$ descends to a functor $\HC(\U,\kf)
%\rightarrow \Walg\operatorname{-mod}^{\pm 1}$, compare to \cite[Section 3.4]{HC}.

Now we summarize some properties of the restriction functor. First, note that
all intermediate functors in the construction of
$$\bullet_{\dagger,\chi}: \wHC^{gr}(\U_\hbar)^{K,\kappa}\rightarrow \wHC^{gr}(\Walg_\hbar).$$
(the completion, the pushforward under an isomorphism and taking the locally finite part for
the $\C^\times$-action) are exact. So $\bullet_{\dagger,\chi}$ is exact.

Further, take $M_\hbar\in \wHC(\U_\hbar)$ and write $\Supp(M_\hbar)$
for its support in $(\g/\kf)^*$ (so that for $M\in \HC(\U,\kf)$, we have
$\Supp(R_\hbar(M))=\V(M)$). Similarly, we can talk about the supports of
modules over the completed algebras involved in the construction of
$\bullet_{\dagger,\chi}$. The support of the $\Weyl_\hbar^{\wedge_0}$-module
$\C[[L,\hbar]]$ is $(\g/\kf)^*\cap T_\chi \Orb$. It follows that
\begin{equation}\label{eq:support_restriction}
\Supp((M_\hbar)_{\dagger,\chi})=\Supp(M_\hbar)\cap S.
\end{equation}

Next, we will need the compatibility between $\bullet_{\dagger,\chi}$ and Hom's from
HC bimodules. Let $\B_\hbar$ denote a weakly HC $\U_\hbar$-bimodule and $M_\hbar$ a weakly $(K,\kappa)$-equivariant HC $\U_\hbar$-module. It is
easy to see that $\Hom_{\U_\hbar}(\B_\hbar,M_\hbar)$ is a $(K,\kappa)$-equivariant weakly HC $\U_\hbar$-module.
The following claim follows from \cite[Remark 5.7]{LY}.

\begin{Lem}\label{Lem:HC_hom_restr_compat}
There is a bi-functorial isomorphism
$$\Hom_{\U_\hbar}(\B_\hbar,M_\hbar)_{\dagger,\chi}\xrightarrow{\sim} \Hom_{\Walg_\hbar}((\B_\hbar)_{\dagger,\chi},(M_\hbar)_{\dagger,\chi}).$$
\end{Lem}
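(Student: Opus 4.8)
The plan is to reduce the claim to the analogous compatibility statement in the completed (non-$\C^\times$-finite) setting, which is what \cite[Remark 5.7]{LY} provides, and then check that taking $\C^\times$-finite vectors is compatible with the $\Hom$ on both sides. Concretely, I would first recall that $\bullet_{\dagger,\chi}$ is the composite of three operations: completion at $\chi$, pushing forward along the decomposition \eqref{eq:algebra_decomposition} (equivalently, passing to the annihilator $N'_\hbar$ of the Lie algebra action of the lagrangian $L$), and taking the locally finite part for the rescaled $\C^\times$-action. So the task is to verify that $\Hom_{\U_\hbar}(\B_\hbar, M_\hbar)$ transforms correctly under each of these three steps, functorially in $\B_\hbar$ and $M_\hbar$.

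For the completion step, the point is that $\B_\hbar$ is a \emph{finitely generated} $\U_\hbar$-bimodule (this is part of being weakly HC), hence finitely presented, so $\U_\hbar^{\wedge_\chi}\otimes_{\U_\hbar}\Hom_{\U_\hbar}(\B_\hbar,M_\hbar)\xrightarrow{\sim}\Hom_{\U_\hbar^{\wedge_\chi}}(\B_\hbar^{\wedge_\chi}, M_\hbar^{\wedge_\chi})$ by the standard flatness-plus-finite-presentation argument; here one uses that $\U_\hbar^{\wedge_\chi}$ is flat over $\U_\hbar$ (stated in Section~\ref{SS_W_alg}) and that the relevant $\Hom$ groups are themselves finitely generated over the appropriate Rees-type algebra so that completion commutes with them. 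For the middle step, under \eqref{eq:algebra_decomposition} the bimodule $\B_\hbar^{\wedge_\chi}$ decomposes as $(\text{Weyl-algebra bimodule})\,\widehat\otimes\, (\B_\hbar)_{\dagger,\chi}$-type factor, and $\Hom$ out of a completed tensor product of a Weyl-algebra factor with a $\Walg_\hbar$-factor decomposes accordingly; this is exactly the content of \cite[Remark 5.7]{LY}, which I would simply invoke, obtaining $\Hom_{\U_\hbar}(\B_\hbar,M_\hbar)^{\wedge_\chi}$, after passing to the $L$-annihilator, identified with $\Hom$ over the completed W-algebra between the corresponding $L$-annihilators. Finally, for the locally finite part: on a $\C^\times$-equivariant $\Hom$ space between two modules on which $\C^\times$ acts with the rescaled action, the $\C^\times$-finite vectors of the $\Hom$ are computed by $\Hom$ between the $\C^\times$-finite parts, provided the source is finitely generated — which again holds because $(\B_\hbar)_{\dagger,\chi}$ is finitely generated over $\Walg_\hbar$. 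Assembling these three identifications gives the asserted bifunctorial isomorphism.

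The main obstacle I anticipate is the bookkeeping around the $\C^\times$-actions and the rescaling by $d/2$: one must make sure the grading conventions used in the definition of $\bullet_{\dagger,\chi}$ on bimodules and on modules are the ones that make the $\Hom$ naturally $\C^\times$-equivariant, and that ``taking locally finite vectors'' genuinely commutes with $\Hom$ here rather than only injecting. The cleanest way around this is to observe that $\C^\times$-finiteness commutes with $\Hom$ from a finitely generated module whenever the acting $\C^\times$ has finite-dimensional weight spaces on the target locally finite part and the source is generated by finitely many $\C^\times$-semiinvariant elements — both of which are built into the weakly HC formalism. With that lemma in hand, the remaining verifications are the routine flat-base-change and finite-presentation manipulations sketched above, and functoriality in both arguments is automatic since every identification used is natural.
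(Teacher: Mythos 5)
Your proposal is correct and follows the same route as the paper, which offers no independent argument but simply asserts that the lemma ``follows from \cite[Remark 5.7]{LY}''; your three-step unpacking (flat base change along the completion, the splitting of $\Hom$ across the decomposition (\ref{eq:algebra_decomposition}) via that remark, and compatibility of $\Hom$ with taking $\C^\times$-finite parts) is a reasonable elaboration of exactly that citation. The only caution is the bookkeeping you already flag: the module structure on $\Hom_{\U_\hbar}(\B_\hbar,M_\hbar)$ used for completion comes from the \emph{right} action on $\B_\hbar$, so the finite-presentation/base-change step must be run with respect to that structure — but this is handled in the cited remark.
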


\subsection{Generalization to Dixmier algebras}\label{SS_Dixmier_generalization}
 Recall that by a Dixmier algebra one means an associative algebra $\A$ equipped with a rational action of $G$ that comes with a quantum
comoment map $\U\rightarrow \A$ (a $G$-equivariant map such that the resulting adjoint action of $\g$
on $\A$ coincides with the differential of the $G$-action) such that $\A$ is finitely generated
as a left module over $U(\g)$. So $\A$ becomes a HC bimodule over $\U$. One can show,
\cite[Lemma 5.1]{LY}, that there is a $G$-stable algebra filtration on $\A$ that is also a good bimodule
filtration. Consider the Rees algebra $\A_\hbar$ of $\A$ with respect to such a filtration
and its completion $\A_\hbar^{\wedge_\chi}$. Note that there is a natural homomorphism
$\U_\hbar\rightarrow \A_\hbar$ and hence $\U_\hbar^{\wedge_\chi}\rightarrow \A_\hbar^{\wedge_\chi}$.
Then $\A_{\hbar}^{\wedge_\chi}$
decomposes as $\Weyl_\hbar^{\wedge_0}\widehat{\otimes}_{\C[[\hbar]]}\A_\hbar'$,
where $\A_\hbar'$ is the centralizer of the image of $V$. Let $\A_{\hbar,\dagger}$
denote  the $\C^\times$-locally finite part of $\A_\hbar'$, and $\A_{\dagger}$ be its
specialization at
at $\hbar=1$. This is a filtered algebra equipped with a parity involution
$\varsigma$ and a filtered algebra homomorphism $\Walg\rightarrow \A_{\dagger}$
compatible with parity involutions.

By a weakly HC $\A_\hbar$-module we mean an $\A_\hbar$-module that becomes weakly HC
after pulling back to $\U_\hbar$. Similarly, we can talk about graded weakly HC modules.
The corresponding category will be denoted by $\wHC^{gr}(\A_\hbar)$. And similarly,
we can talk about HC $\A$-modules, weakly HC $\A_{\hbar,\dagger}$-modules, etc.
Then we get the functor $\bullet_{\dagger,\chi}:\wHC^{gr}(\A_\hbar)\rightarrow \wHC^{gr}(\A_{\hbar,\dagger})$ lifting the functor from Section \ref{SS_res_fun_constr}.

We finish this section with a discussion of an interesting class of Dixmier algebras.
Let $\tilde{\Orb}$ be a $G$-equivariant cover of a nilpotent orbit in $\g^*$,
let $\mu$ denote the corresponding map $\tilde{\Orb}\rightarrow \g^*$.
The algebra $\C[\tilde{\Orb}]$ is Poisson and graded. In many cases (for example,
when $\tilde{\Orb}\hookrightarrow \g^*$) one can assume that the degree of the Poisson
bracket is $-1$, while in general one has degree $-2$.
One can talk about filtered quantizations of $\C[\tilde{\Orb}]$. The algebra $\C[\tilde{\Orb}]$
is finitely generated, the corresponding variety $X$ is singular symplectic. Using this, one can give a classification of quantizations of $\C[\tilde{\Orb}]$, \cite[Theorem 3.4]{orbit}. These quantizations
are Dixmier algebras, \cite[Section 5.2]{orbit}, and in the case when the degree of the bracket is $-1$ (which is what we are going to assume for now), the corresponding filtration on a quantization $\A$ is a good algebra filtration. The algebra $\A_{\dagger}$ is isomorphic to
$\C[\mu^{-1}(\chi)]$ and the filtration is trivial meaning that $
\A_{\dagger,\leqslant 0}=\{0\},\A_{\dagger,\leqslant 0}=\A_{\dagger}$,
\cite[Section 5.2]{orbit}.

\begin{Rem}\label{Rem:doubled_grading}
Now consider the case when the degree of the bracket is $-2$. In this case the default filtration on
$\A$ is not compatible with the PBW filtration on $\U$, however, it is compatible with
doubled PBW filtration on $\U$. And we have the parity involution
on a quantization $\A$ and can talk about $\Z/2\Z$-graded HC modules. Their category will be denoted
by $\HC(\A)^{K,\kappa,\Z/2\Z}$. The constructions of the algebra $\A_\hbar$ and the functor $\bullet_{\dagger,\chi}$
generalize to the present setting by replacing the Rees construction with its modification.
Namely, we consider the modified Rees construction
for $\Z/d\Z$-graded spaces, compare to \cite[Section 2.3]{LY}. Let $V=\bigoplus_{a\in \Z/d\Z}V_a$ be such a space. Choose a filtration $V=\bigcup V_{\leqslant i}$ that is {\it compatible}
with the $\Z/d\Z$-grading in the following sense: each $V_{\leqslant i}$ is graded, and 
the resulting $\Z/d\Z$-grading on $\gr V$ comes from the $\Z$-grading meaning that 
$(\gr V)_i\subset \gr(V_{i\operatorname{mod} d})$.

By the {\it modified Rees module} we mean $R^{\Z/d\Z}_\hbar(V):=\bigoplus_{i} (V_{\leqslant i}\cap V_{i\mod d})\hbar^{i/d}\subset V[\hbar^{\pm 1/d}]$, this is a graded $\C[\hbar]$-submodule with
$\hbar$ of degree $d$. 

Now we can form $\A_\hbar:=R^{\Z/2\Z}_\hbar(\A)$. With this, the construction in Section
\ref{SS_res_fun_constr} goes through verbatim. 
\end{Rem}

\section{Canonical filtration}\label{S_canon_filtr}
Recall that $\Orb\subset \g^*$ is a nilpotent orbit. Let $\A$ be a Dixmier algebra that we equip
with a good filtration. The goal of this section is to construct a distinguished good filtration on $M\in \HC_\Orb^\partial(\A)^{K,\kappa}$.
For $\A=\U$ (with the PBW filtration) $K\subset G$ and $\kappa=0$ we recover the situation considered in the introduction. 

\subsection{Comparable lattices}
In what follows we will use the correspondence between good filtrations on $M$ (here we do not require them to be $K$-stable)
and graded lattices in
$M[\hbar^{\pm 1}]$. Namely, by a {\it lattice} in $M[\hbar^{\pm 1}]$ we mean a finitely generated
$\A_\hbar$-submodule $M_\hbar$ such that $M_\hbar[\hbar^{-1}]\xrightarrow{\sim} M[\hbar^{\pm 1}]$. There is a one-to-one correspondence between graded lattices in $M[\hbar^{\pm 1}]$ and good $\A$-module filtrations on $M$: we send a good filtration
to the corresponding Rees module, and we send a lattice $M_\hbar$ to the filtered module
$M_\hbar/(\hbar-1)$ (naturally identified with $M$). The condition that a good filtration is
$K$-stable translates to the condition that the lattice is $K$-stable (where $\kf$-action comes from the action of the Lie algebra
$\mathcal{J}_\hbar:=\U_\hbar\kf+ \U_\hbar\hbar\subset \U_\hbar$). A lattice is $K$-stable if and only
if it is $\mathcal{J}_\hbar$-stable. Another easy remark: if $M^1_\hbar,M^2_\hbar$ are two lattices
in $M[\hbar^{\pm 1}]$, then so are their sum and intersection. In what follows all lattices in $M[\hbar^{\pm 1}]$ (for $M\in \HC(\A)$) we consider are going to be graded $K$-stable
lattices.

Similarly, there is a one-to-one correspondence between compatible (in the sense of Remark 
\ref{Rem:doubled_grading}) good filtrations on objects
$N\in \Walg\operatorname{-mod}^{\Z/d\Z}$ and graded $\Walg_\hbar$-lattices in $N[\hbar^{\pm 1}]$.
To get from a filtration to a lattice, we take the modified Rees module $R_\hbar^{\Z/d\Z}(N)$, compare to
Remark \ref{Rem:doubled_grading}.

\begin{defi}\label{defi:comparable}
Let $M\in \HC(\A)$, and $M^1_\hbar,M^2_\hbar$ be two (graded $K$-stable) lattices
in $M[\hbar^{\pm 1}]$. We say that $M^1_\hbar,M^2_\hbar$ are {\it comparable} if
the dimension of the support of $M^i_\hbar/(M^1_\hbar\cap M^2_\hbar)$ for both $i=1,2$
is strictly less than $\dim \V(M)$.
\end{defi}

The following lemma describes basic properties of comparable lattices. The proof is easy and is left as an exercise.

\begin{Lem}\label{Lem:compar_properties}
The following claims are true:
\begin{enumerate}
\item Let $M_\hbar, M'_\hbar$ be lattices such that $M'_\hbar$ is
contained a lattice comparable with $M_\hbar$. Then $M_\hbar+ M'_\hbar$
is comparable with $M_\hbar$.
\item Let $M_\hbar, M'_\hbar$ be lattices such that $M'_\hbar$
contains a lattice comparable with $M_\hbar$. Then $M_\hbar\cap M'_\hbar$
is comparable with $M_\hbar$.
\item The comparability is an equivalence relation.
\end{enumerate}
\end{Lem}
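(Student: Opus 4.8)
The plan is to work throughout with the numerical invariant attached to a pair of lattices: for graded $K$-stable lattices $M^1_\hbar, M^2_\hbar$ in $M[\hbar^{\pm 1}]$, comparability says $\dim\Supp\bigl(M^i_\hbar/(M^1_\hbar\cap M^2_\hbar)\bigr) < \dim\V(M)$ for $i=1,2$. I would first record two elementary remarks that make the whole argument mechanical. First, if $N_\hbar \subseteq N'_\hbar$ are two lattices in $M[\hbar^{\pm 1}]$, then $N'_\hbar/N_\hbar$ is a finitely generated graded $\A_\hbar$-module which is killed by a power of $\hbar$ (since inverting $\hbar$ kills the quotient), hence $\Supp(N'_\hbar/N_\hbar)\subseteq (\g/\kf)^*$ is proper closed, and $\dim\Supp(N'_\hbar/N_\hbar)\leqslant \dim\V(M)$ always, with comparability being the strict inequality. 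Second, I need the standard additivity: for a short exact sequence of finitely generated graded modules, the dimension of the support of the middle term is the max of the dimensions of the supports of the outer terms; and $\dim\Supp(P\oplus Q)=\max(\dim\Supp P,\dim\Supp Q)$. Both are immediate from the corresponding facts about associated primes / annihilator ideals.

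For part (3) I would prove reflexivity and symmetry trivially, and transitivity as follows. Suppose $M^1_\hbar$ is comparable to $M^2_\hbar$ and $M^2_\hbar$ to $M^3_\hbar$. Set $L_\hbar := M^1_\hbar\cap M^2_\hbar\cap M^3_\hbar$. Then $M^1_\hbar/L_\hbar$ sits in an exact sequence with sub $\subseteq M^1_\hbar/(M^1_\hbar\cap M^2_\hbar)$ — indeed $(M^1_\hbar\cap M^2_\hbar)/L_\hbar \hookrightarrow M^2_\hbar/(M^2_\hbar\cap M^3_\hbar)$ — and quotient $M^1_\hbar/(M^1_\hbar\cap M^2_\hbar)$, so $\dim\Supp(M^1_\hbar/L_\hbar)$ is bounded by the max of two dimensions, each $<\dim\V(M)$ by hypothesis; hence $\dim\Supp(M^1_\hbar/(M^1_\hbar\cap M^3_\hbar))\leqslant \dim\Supp(M^1_\hbar/L_\hbar)<\dim\V(M)$, since $L_\hbar\subseteq M^1_\hbar\cap M^3_\hbar\subseteq M^1_\hbar$ forces the quotient $M^1_\hbar/(M^1_\hbar\cap M^3_\hbar)$ to be a quotient of $M^1_\hbar/L_\hbar$. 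The bound on $M^3_\hbar/(M^1_\hbar\cap M^3_\hbar)$ is symmetric. The one thing to double-check here is that all of $L_\hbar$, the pairwise intersections, and the sums below really are lattices, i.e. finitely generated over $\A_\hbar$ with the correct generic fiber; this is the ``another easy remark'' recalled in the text just before the definition (sums and intersections of lattices are lattices), applied iteratively, and that $\A_\hbar$ is Noetherian.

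For parts (1) and (2), which are dual, I would handle (1) and then obtain (2) by the same bookkeeping. In (1), let $M''_\hbar \supseteq M'_\hbar$ be a lattice comparable with $M_\hbar$. Then $M_\hbar + M'_\hbar \subseteq M_\hbar + M''_\hbar$, and $(M_\hbar+M''_\hbar)/M_\hbar \cong M''_\hbar/(M_\hbar\cap M''_\hbar)$, whose support has dimension $<\dim\V(M)$ by comparability of $M''_\hbar$ with $M_\hbar$ (using the additivity remark on $M''_\hbar/(M_\hbar\cap M''_\hbar) \hookrightarrow M_\hbar/(M_\hbar\cap M''_\hbar)$... rather: $M''_\hbar/(M_\hbar\cap M''_\hbar)$ is a quotient — no, it is a sub of $\bigl(M_\hbar+M''_\hbar\bigr)/M_\hbar$, equal to it; the point is simply that $\dim\Supp\bigl(M''_\hbar/(M_\hbar\cap M''_\hbar)\bigr)$ equals $\dim\Supp\bigl(M_\hbar/(M_\hbar\cap M''_\hbar)\bigr)$ up to the max bound, and that the latter is $<\dim\V(M)$). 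Hence $(M_\hbar+M'_\hbar)/M_\hbar$, being a submodule of $(M_\hbar+M''_\hbar)/M_\hbar$, also has support of dimension $<\dim\V(M)$; symmetrically $M_\hbar+M'_\hbar$ modulo its intersection with $M_\hbar$ — which is just $M_\hbar$ — is zero, so that side is automatic; wait, we need $\dim\Supp\bigl((M_\hbar+M'_\hbar)/(M_\hbar\cap(M_\hbar+M'_\hbar))\bigr)=\dim\Supp\bigl((M_\hbar+M'_\hbar)/M_\hbar\bigr)$, which we just bounded, and $\dim\Supp\bigl(M_\hbar/(M_\hbar\cap(M_\hbar+M'_\hbar))\bigr)=\dim\Supp(M_\hbar/M_\hbar)=0$. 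So $M_\hbar+M'_\hbar$ is comparable with $M_\hbar$. Part (2) is the Artin–Rees-free mirror image: if $M''_\hbar\subseteq M'_\hbar$ is comparable with $M_\hbar$, then $M_\hbar\cap M'_\hbar \supseteq M_\hbar\cap M''_\hbar$ and one bounds $\dim\Supp\bigl(M_\hbar/(M_\hbar\cap M'_\hbar)\bigr)\leqslant \dim\Supp\bigl(M_\hbar/(M_\hbar\cap M''_\hbar)\bigr)<\dim\V(M)$ and, for the other index, $\bigl(M_\hbar\cap M'_\hbar\bigr)/\bigl(M_\hbar\cap M''_\hbar\bigr)\hookrightarrow M'_\hbar/M''_\hbar$, a subquotient of $M'_\hbar/(M_\hbar\cap M''_\hbar)$ — hmm, one needs $M'_\hbar/M''_\hbar$ to have small support, which follows from $M''_\hbar$ comparable with $M_\hbar$ only if $M'_\hbar$ is itself controlled; here one should instead note $M'_\hbar/(M_\hbar\cap M'_\hbar)$ is what must be bounded, and chase through $M''_\hbar\subseteq M_\hbar\cap M'_\hbar$ is false in general. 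The honest move: in (2) use that $M''_\hbar$ comparable with $M_\hbar$ gives both $M_\hbar/(M_\hbar\cap M''_\hbar)$ and $M''_\hbar/(M_\hbar\cap M''_\hbar)$ of small support; then $M'_\hbar/(M_\hbar\cap M'_\hbar)\cong (M_\hbar+M'_\hbar)/M_\hbar$ and apply part (1) with $M''_\hbar$ in the role of a lattice comparable with $M_\hbar$ contained in $M'_\hbar$ — this is exactly the hypothesis of (1) with the roles swapped, so (2) follows from (1) by intersecting instead of summing, using the symmetry between the two constructions. I expect the only genuine subtlety, hence the main obstacle, to be keeping the several short exact sequences and their sub/quotient relations straight so that every dimension bound is applied to an actual sub- or quotient-module of a module already known to have small support; the ring-theoretic inputs (Noetherianity of $\A_\hbar$, $\hbar$-torsion of lattice quotients, additivity of $\dim\Supp$) are entirely routine.
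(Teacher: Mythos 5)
The paper gives no proof of this lemma (it is explicitly left as an exercise), so there is nothing to compare against; the question is only whether your argument is sound, and in substance it is. Your two elementary inputs ($\hbar$-torsion and finite generation of $N'_\hbar/N_\hbar$ for nested lattices, and additivity of $\dim\Supp$ in short exact sequences), the triple-intersection argument for transitivity via $L_\hbar=M^1_\hbar\cap M^2_\hbar\cap M^3_\hbar$ with the embedding $(M^1_\hbar\cap M^2_\hbar)/L_\hbar\hookrightarrow M^2_\hbar/(M^2_\hbar\cap M^3_\hbar)$, and the sandwich $M_\hbar+M'_\hbar\subseteq M_\hbar+M''_\hbar$ for part (1) are all correct, and you correctly use that sums and intersections of lattices are again lattices.

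The only place where your write-up does not close cleanly is the ``other index'' in part (2), where you visibly lose the thread. But the resolution is the exact mirror of the observation you already made in (1): comparability of $M_\hbar\cap M'_\hbar$ with $M_\hbar$ asks about the two quotients by $(M_\hbar\cap M'_\hbar)\cap M_\hbar$, and since $M_\hbar\cap M'_\hbar\subseteq M_\hbar$ this intersection equals $M_\hbar\cap M'_\hbar$ itself; hence the quotient $(M_\hbar\cap M'_\hbar)/\bigl((M_\hbar\cap M'_\hbar)\cap M_\hbar\bigr)$ is zero and there is nothing to bound. The only nontrivial quotient is $M_\hbar/(M_\hbar\cap M'_\hbar)$, which you did bound correctly via $M_\hbar\cap M''_\hbar\subseteq M_\hbar\cap M'_\hbar$. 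With that one sentence inserted, the digression about $M'_\hbar/(M_\hbar\cap M'_\hbar)$ (which is not required by the statement --- the claim concerns the pair $(M_\hbar\cap M'_\hbar,\,M_\hbar)$, not $(M'_\hbar,\,M_\hbar)$) can simply be deleted, and the proof is complete.
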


Here is our main result about comparable lattices.

\begin{Thm}\label{Thm:max_filtration}
Suppose that $M\in \HC^\partial_\Orb(\A)^{K,\kappa}$.
In each comparability class, there is the unique \underline{maximal} (w.r.t inclusion) lattice.
Moreover, for a graded $K$-stable lattice $M_\hbar\subset M[\hbar^{\pm 1}]$, the following two conditions are equivalent:
\begin{enumerate}
\item $M_\hbar$ is maximal in its comparability class.
\item The dimension of the support of every nonzero $\gr\A$-submodule in $\gr M$ is equal to $\dim \V(M)$.
\end{enumerate}
\end{Thm}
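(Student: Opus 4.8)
## Proof strategy for Theorem \ref{Thm:max_filtration}

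The plan is to first establish the equivalence (1) $\Leftrightarrow$ (2), and then deduce existence and uniqueness of the maximal lattice from it, using the operation of \emph{saturation}. Throughout I work with the correspondence between lattices and good filtrations, so condition (2) can be phrased either in terms of $\gr M$ or the corresponding graded module $M_\hbar/(\hbar)$ over $\gr \A$.

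First I would record the translation between comparability and the $\hbar$-module structure. If $M_\hbar^1 \subseteq M_\hbar^2$, then $M_\hbar^2/M_\hbar^1$ is a finitely generated graded $\A_\hbar$-module killed by some power of $\hbar$ (since inverting $\hbar$ makes it zero), hence a $\gr\A$-module after a filtration, and its support has dimension $< \dim\V(M)$ if and only if the two lattices are comparable. The key elementary observation is: for a lattice $M_\hbar$, the submodule $\hbar M_\hbar \subsetneq M_\hbar$ has quotient $\gr M$, whose support is exactly $\V(M)$; and more generally for any lattice $M'_\hbar$ containing $\hbar M_\hbar$ inside $M_\hbar$, the quotient $M_\hbar/M'_\hbar$ is a subquotient of $\gr M$. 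So comparable lattices correspond, roughly, to filtrations whose associated gradeds differ only in lower-dimensional support.

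\medskip

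\noindent\textbf{(2) $\Rightarrow$ (1).} Suppose $\gr M$ has the property that every nonzero $\gr\A$-submodule has full support $\dim\V(M)$; I want to show $M_\hbar$ is maximal in its class. Let $M'_\hbar \supseteq M_\hbar$ be comparable. After replacing $M'_\hbar$ by $M'_\hbar \cap \hbar^{-1}M_\hbar$ (still comparable by Lemma \ref{Lem:compar_properties}, and still containing $M_\hbar$) I may assume $\hbar M'_\hbar \subseteq M_\hbar$, so $Q := M'_\hbar/M_\hbar$ is a graded $\gr\A$-module, of support dimension $< \dim\V(M)$ by comparability, and also $Q \hookrightarrow M_\hbar/\hbar M_\hbar = \gr M$ via multiplication by $\hbar$ (shifting the grading). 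This realizes $Q$ as a submodule of $\gr M$ of non-full support, so $Q = 0$ by hypothesis, i.e.\ $M'_\hbar = M_\hbar$. Thus $M_\hbar$ is maximal.

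\medskip

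\noindent\textbf{(1) $\Rightarrow$ (2).} Conversely, suppose $\gr M$ has a nonzero submodule $\bar Q$ of support dimension $< \dim\V(M)$. Pull back $\bar Q \subseteq M_\hbar/\hbar M_\hbar$ to $\tilde Q \subseteq M_\hbar$, so $\hbar M_\hbar \subseteq \tilde Q$ and $M_\hbar/\tilde Q$ has support of dimension $< \dim\V(M)$. Set $M'_\hbar := \hbar^{-1}\tilde Q$; this is a graded $K$-stable lattice (it contains $M_\hbar$ with small-support quotient $\cong M_\hbar/\tilde Q$ up to shift), hence comparable with $M_\hbar$, and it strictly contains $M_\hbar$ because $\tilde Q \supsetneq \hbar M_\hbar$. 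So $M_\hbar$ is not maximal. This proves the equivalence.

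\medskip

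\noindent\textbf{Existence and uniqueness of the maximal lattice.} Now fix a comparability class and a representative $M_\hbar$. I would define the candidate maximal lattice as a \emph{saturation}: the plan is to show that $M_\hbar^{\mathrm{sat}} := \{m \in M[\hbar^{\pm 1}] : \text{some lattice comparable with } M_\hbar \text{ contains } m\}$ is itself a lattice. It is clearly $\A_\hbar$-stable, graded and $K$-stable and contains $M_\hbar$; the content is that it is \emph{finitely generated} over $\A_\hbar$, and that it lies in a single lattice comparable with $M_\hbar$ (so that the union is not "too big"). This is where condition $M \in \HC^\partial_\Orb(\A)^{K,\kappa}$ is essential and is the main obstacle. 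The point of $\HC^\partial_\Orb$ — $M$ has no nonzero submodule with associated variety in $\overline{\Orb}\setminus\Orb$ — is, after passing to $\hbar$, that $M_\hbar$ (equivalently $M$) has no nonzero submodule of support dimension $< \dim\V(M)$, i.e.\ $M$ has "depth $\geq 1$ along the boundary" in the naive sense of having no small-support submodules. I would use this to bound, uniformly over all comparable lattices $M'_\hbar$, the quotient $M'_\hbar/M_\hbar$: it is a small-support torsion-over-$\hbar$ module, and its image in $M[\hbar^{\pm1}]/M_\hbar$ lands in the $\hbar$-torsion-up-to-small-support part, which I claim is finitely generated. Concretely, consider $\Hom_{\A_\hbar}(-, M_\hbar)$-type arguments, or better: the union $M_\hbar^{\mathrm{sat}}$ is contained in the largest submodule $\widetilde{M}_\hbar$ of $\hbar^{-N}M_\hbar$ (for $N\gg0$) such that $\widetilde M_\hbar/M_\hbar$ has small support; since $\hbar^{-N}M_\hbar/M_\hbar$ is a Noetherian $\gr\A$-module, it has a largest submodule of dimension $< \dim\V(M)$, call its preimage $\widetilde{M}_\hbar$ — this is a lattice, comparable with $M_\hbar$, and by construction contains every comparable lattice that contains $M_\hbar$; combined with Lemma \ref{Lem:compar_properties}(1) (sums of comparable lattices are comparable) it contains every lattice in the class after intersecting/summing appropriately. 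Hence $\widetilde M_\hbar$ is the unique maximal element. Finally, that this $\widetilde M_\hbar$ satisfies (2) is immediate from the equivalence already proved, since by construction $\hbar^{-N}M_\hbar/\widetilde M_\hbar$, and hence $\gr(\widetilde M_\hbar/(\hbar))$, has no small-support submodule (we quotiented by the largest one, and the "depth $\geq 1$" property of $M$ rules out new ones appearing). The delicate point deserving care is checking that passing from "largest small-support submodule of $\hbar^{-N}M_\hbar/M_\hbar$" back to a lattice does not create a nonzero small-support submodule in the associated graded — this is exactly where I would invoke that $M \in \HC^\partial_\Orb(\A)^{K,\kappa}$ forbids small-support submodules of $M$ itself, propagating down to $\gr$ of the saturated filtration.
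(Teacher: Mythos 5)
Your proof of the equivalence $(1)\Leftrightarrow(2)$ is essentially the paper's argument and is correct, with one small omission: in the direction $(1)\Rightarrow(2)$ you must arrange that the small-support submodule $\bar Q\subset\gr M$ you pull back is graded and $K$-stable, or else $\hbar^{-1}\tilde Q$ need not be a graded $K$-stable lattice. The paper handles this by taking $\bar Q$ to be the \emph{maximal} submodule with small support, which is automatically $\C^\times$- and $K$-stable. This is easily repaired.

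The existence of the maximal lattice, however, has a genuine gap. Your saturation $M_\hbar^{\mathrm{sat}}$ is precisely the union of the increasing chain $M_\hbar^0\subset M_\hbar^1\subset\cdots$, where $M_\hbar^N$ is the preimage of the largest small-support submodule of $\hbar^{-N}M_\hbar/M_\hbar$; the entire content of the theorem is that this chain stabilizes, and you assert this (``for $N\gg 0$'', ``which I claim is finitely generated'') rather than prove it. The hypothesis $M\in\HC^\partial_\Orb(\A)^{K,\kappa}$ does not yield stabilization by any direct Noetherianity argument: the quotients $M_\hbar^{i+1}/M_\hbar^i$ embed into one another to form a \emph{descending} chain of submodules of $M_\hbar^1/M_\hbar^0$, and a descending chain of finitely generated small-support modules need not terminate (Noetherian means ACC, not DCC). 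This is exactly why the paper routes the argument through the restriction functors $\bullet_{\dagger,\chi}$ at points $\chi$ of the boundary strata of $\V(M)$: by induction on $\dim\V(M)-\dim K\chi$ one reduces to fd-comparable lattices in the finite-dimensional module $M_{\dagger,\chi}$ (Proposition \ref{Prop:max_filtr_dim1}), where DCC does hold, and the limit of the non-stabilizing quotients is identified with $K_\hbar/\hbar K_\hbar$ for a submodule $K_\hbar$ supported at $\chi$; its nonvanishing would produce a nonzero finite-dimensional $\A_\dagger$-submodule of $M_{\dagger,\chi}$, which is excluded by Lemma \ref{Lem:no_fin_dim_subs} --- and \emph{that} lemma is where the $\HC^\partial_\Orb$ condition actually enters, via Lemma \ref{Lem:HC_hom_restr_compat} and the classification of finite-dimensional $\Walg$-modules. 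Your proposal correctly identifies that the $\HC^\partial_\Orb$ hypothesis must be used and where, but the mechanism you offer (``no small-support submodules of $M$ propagates to $\gr$ of the saturated filtration'') is circular: the associated graded of $M_\hbar^N$ has no small-support submodule only \emph{after} you know the chain has stabilized at $N$. Some substitute for the W-algebra reduction (or another device converting the problem to one with DCC) is needed.
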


The filtration mentioned in
Theorem \ref{Thm:main} is the maximal filtration in a certain comparability class.
Theorem  \ref{Thm:max_filtration} will be proved in
Section \ref{SS_thm_normalized_proof} after some preparation.

\begin{Rem}\label{Rem:no_filtr}
We remark that if $M\in \HC_{\overline{\Orb}}(\A)$ has a submodule supported on $\partial \Orb$, then it
cannot have a filtration satisfying (2).
\end{Rem}

\subsection{Classification}
In this section, we give a classification of lattices in $M[\hbar^{\pm 1}]$ up to comparability.
We start with some terminology.

Let $V$ be a finite dimensional module in $\A_{\dagger}\operatorname{-mod}^{\pm 1}$. An $\A_{\dagger}$-module
filtration on $V$ is called Harish-Chandra if for all $a\in \Walg^{-\theta}_{\leqslant i}$ (the superscript means the $-1$ eigenspace for $\theta$) and $v\in V_{\leqslant j}$, we have
$av\in V_{\leqslant i+j-2}$. Equivalently, the corresponding modified Rees module $R_\hbar(V)\subset V[\hbar^{\pm 1}]$ is stable under the Lie algebra action of the ideal $\mathcal{J}_\hbar\subset\Walg_\hbar$. We say that graded $\A_\hbar$-lattices in $V[\hbar^{\pm 1}]$
stable under the $\mathcal{J}_\hbar$-action are {\it HC lattices}.

Let $\Orb^1_K,\ldots, \Orb^\ell_K$ be all open $K$-orbits in $\V(M)$ (they all have dimension $\dim \V(M)=\frac{1}{2}\dim \Orb$).
Pick $\chi^i\in \Orb^i_K$. In particular, by (\ref{eq:support_restriction}),
we have $\dim M_{\dagger,\chi^i}<\infty$ for all $i$. By the construction in Section \ref{SS_res_fun_constr},
a choice of a good filtration on
$M$ gives rise to a choice of a HC filtration on $M_{\dagger,\chi^i}$
(compatible with the $\Z/d\Z$-grading).

\begin{Prop}\label{Prop:lattices_classification}
The map that sends a lattice in $M[\hbar^{\pm 1}]$ to the collection of
induced graded HC lattices in $M_{\dagger,\chi^i}[\hbar^{\pm 1}]$ defines a bijection between
the sets of
\begin{itemize}
\item[(a)] The comparability classes of  graded $K$-stable $\A_\hbar$-lattices in $M[\hbar^{\pm 1}]$.
\item[(b)] The collections of graded HC $\A_{\hbar,\dagger}$-lattices in $M_{\dagger,\chi^i}[\hbar^{\pm 1}]$ for $i=1,\ldots,k$.
\end{itemize}
\end{Prop}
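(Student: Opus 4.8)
The plan is to build the inverse map and check both compositions are the identity. Given a collection of graded HC $\A_{\hbar,\dagger}$-lattices $N^i_\hbar\subset M_{\dagger,\chi^i}[\hbar^{\pm 1}]$, I want to produce a comparability class of graded $K$-stable lattices in $M[\hbar^{\pm 1}]$. The key observation is that, by Theorem~\ref{Thm:max_filtration}, each comparability class has a unique maximal lattice, so it suffices to produce one lattice with the prescribed restrictions and then pass to the maximal lattice in its class; equivalently, by Lemma~\ref{Lem:compar_properties}, comparability classes form a lattice under sum and intersection and I may work up to comparability throughout. So first I would fix any graded $K$-stable lattice $M_\hbar^0\subset M[\hbar^{\pm 1}]$ (it exists since $M$ admits a good $K$-stable filtration), with restrictions $(M_\hbar^0)_{\dagger,\chi^i}=:N^{i,0}_\hbar$. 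The exactness of $\bullet_{\dagger,\chi}$ (noted after Lemma~\ref{Lem:HC_hom_restr_compat}) and formula (\ref{eq:support_restriction}) are what make the whole correspondence well behaved: restricting an inclusion of lattices gives an inclusion, and the support of a quotient restricts to the intersection of that support with $S$.

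The heart of the argument is a \emph{localization/modification} step: given the target lattices $N^i_\hbar$, I must modify $M_\hbar^0$ near each $\chi^i$ so that its restriction becomes $N^i_\hbar$ while leaving the restriction at the other $\chi^j$ unchanged up to the only ambiguity that matters, namely comparability. Concretely, since $N^{i,0}_\hbar$ and $N^i_\hbar$ are two graded HC lattices in the same finite-dimensional $M_{\dagger,\chi^i}[\hbar^{\pm1}]$, there is $n\gg0$ with $\hbar^{n/d}N^{i,0}_\hbar\subset N^i_\hbar\subset \hbar^{-n/d}N^{i,0}_\hbar$. I would realize the passage from $N^{i,0}_\hbar$ to $N^i_\hbar$ by a finite sequence of elementary moves (adjoining a single $\mathcal{J}_\hbar$-stable cyclic submodule, or intersecting with one), and lift each such move to $M_\hbar^0$ using that $\bullet_{\dagger,\chi^i}$ is exact and, via Lemma~\ref{Lem:HC_hom_restr_compat}, compatible with Hom's from HC bimodules — in particular, generators of submodules of the restriction can be pulled back to maps out of suitable HC bimodules into $M_\hbar$, whose images are the desired $K$-stable sub-lattices upstairs. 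Because these moves change $M_\hbar^0$ only inside the completion at $\chi^i$ — i.e. the quotients involved are supported on $\overline{\Orb_K^i}\setminus\Orb_K^i$ away from $\chi^i$ and hence have support of dimension $<\dim\V(M)$ by the definition of $\HC^\partial_\Orb$ and the openness of $\Orb_K^i$ — the resulting lattice is comparable with $M_\hbar^0$, and the move does not disturb the restriction at $\chi^j$ for $j\neq i$ (the completions at distinct $\chi^j$ are "independent": a quotient supported away from $\chi^j$ restricts to $0$ at $\chi^j$ by (\ref{eq:support_restriction})). Iterating over $i=1,\dots,\ell$ produces a lattice $M_\hbar$ with $(M_\hbar)_{\dagger,\chi^i}\sim N^i_\hbar$ for all $i$; passing to the maximal lattice in its comparability class (Theorem~\ref{Thm:max_filtration}) pins down a well-defined class, giving the candidate inverse map.

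It remains to check the two compositions. That restricting then reconstructing returns the original comparability class amounts to: if two graded $K$-stable lattices $M_\hbar^1, M_\hbar^2$ have $(M_\hbar^1)_{\dagger,\chi^i}=(M_\hbar^2)_{\dagger,\chi^i}$ for all $i$, then they are comparable. For this I would look at $M_\hbar^1\cap M_\hbar^2$ and $M_\hbar^1+M_\hbar^2$: the quotients $M_\hbar^j/(M_\hbar^1\cap M_\hbar^2)$ restrict to $0$ at every $\chi^i$ by exactness of $\bullet_{\dagger,\chi^i}$, so by (\ref{eq:support_restriction}) their supports meet no $\Orb_K^i$, hence — since the $\Orb_K^i$ exhaust the open $K$-orbits of $\V(M)$ and, crucially, $M\in\HC^\partial_\Orb(\A)$ forbids submodules supported on $\partial\Orb$, so $\V$ of any subquotient that still has full-dimensional support must contain some $\Orb_K^i$ — these supports have dimension $<\dim\V(M)$, i.e. $M_\hbar^1$ and $M_\hbar^2$ are comparable. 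The other composition (reconstructing then restricting) is immediate from the construction together with the fact that the restriction functors see a lattice only up to comparability in the needed sense, once one observes that comparable lattices upstairs have equal restrictions — again by exactness and (\ref{eq:support_restriction}), since a comparability quotient has support of dimension $<\dim\V(M)=\dim\Orb_K^i$ and therefore restricts to $0$ at each $\chi^i$.

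\textbf{Main obstacle.} The delicate point is the localization/modification step: lifting an arbitrary change of HC lattice at $\chi^i$ to a genuine $K$-stable $\A_\hbar$-lattice upstairs that is comparable to the original and leaves the other restrictions intact. The exactness of $\bullet_{\dagger,\chi}$ and Lemma~\ref{Lem:HC_hom_restr_compat} (compatibility with Hom's from HC bimodules) are exactly the tools that should make this work — one pulls submodules of the restriction back along maps from HC bimodules — but organizing the "one elementary move at a time" induction, and verifying at each step that the quotient is supported in codimension $\geqslant 1$ and is invisible at $\chi^j$ for $j\neq i$, is where the real work lies; the use of the $\HC^\partial_\Orb$ hypothesis (no submodules supported on $\partial\Orb$) is essential precisely here and in the comparability-criterion direction, and Remark~\ref{Rem:no_filtr} shows it cannot be dropped.
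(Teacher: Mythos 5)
Your treatment of well-definedness and injectivity matches the paper's: exactness of $\bullet_{\dagger,\chi^i}$ together with (\ref{eq:support_restriction}) shows that comparable lattices have equal restrictions, and conversely that equal restrictions force comparability. (The appeal to the $\HC^\partial_\Orb$ hypothesis in the injectivity step is superfluous: a closed $K$-stable subset of $\V(M)$ of full dimension must contain one of the open orbits $\Orb^i_K$, and that is all that is used.)

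The genuine gap is in surjectivity, exactly where you flag the ``main obstacle.'' You propose to pass from $N^{i,0}_\hbar$ to $N^i_\hbar$ by elementary moves lifted via Lemma \ref{Lem:HC_hom_restr_compat}, but this is not carried out, and the tool does not do what you need: that lemma identifies $\Hom_{\U_\hbar}(\B_\hbar,M_\hbar)_{\dagger,\chi}$ with the Hom space downstairs, so to realize a prescribed $\J_\hbar$-stable lattice in $M_{\dagger,\chi^i}[\hbar^{\pm 1}]$ as the restriction of something upstairs you would need a bimodule $\B_\hbar$ whose restriction surjects appropriately, which you do not supply. Moreover, your claim that a move at $\chi^i$ ``does not disturb the restriction at $\chi^j$'' is precisely the point that fails without further work: an $\A_\hbar$-submodule of $\hbar^{-n}M^0_\hbar$ generated by elements chosen to adjust the picture at $\chi^i$ will in general have full support, hence a nonzero and uncontrolled effect at $\chi^j$ --- and since comparable lattices downstairs are actually equal, the inverse map requires the restrictions to be hit exactly, not just up to comparability. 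The paper's construction resolves both issues simultaneously: it pulls back the completed lattice $\C[[L_i,\hbar]]\widehat{\otimes}_{\C[[\hbar]]}\underline{M}'^{i,\wedge_0}_\hbar$ under the completion map $\varphi_i$ and intersects with $\hbar^{-f}M_\hbar$ to obtain a genuine lattice $\tilde{M}^i_\hbar$ whose completion at $\chi^i$ is exactly the target (no bimodule Hom's involved), and then multiplies by a high power $\hat{I}_i$ of the vanishing ideal of $\bigcup_{j\neq i}\Orb^j_K$ before summing over $i$, so that the $i$-th piece contributes exactly $M'^i_\hbar$ at $\chi^i$ while landing inside $M'^j_\hbar$ at each $\chi^j$ with $j\neq i$. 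Without some version of this localization device your iteration over $i$ does not close.
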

\begin{proof}
First, we prove that the map is constant on comparability classes.
The functor $\bullet_{\dagger,\chi}$ kills the weakly
HC modules whose support does not intersect $\Orb_K$. It follows that if $M^1_\hbar,M^2_\hbar$
are compatible, then $(M^j_\hbar/(M^1_\hbar\cap M^2_\hbar))_{\dagger,\chi^i}=0, j=1,2, i=1,\ldots,k$. Since $\bullet_{\dagger,\chi^i}$ is an exact functor, we see that
$(M^1_\hbar)_{\dagger,\chi^i}=(M^2_\hbar)_{\dagger,\chi^i}$ for all $i$. Hence, we indeed get a
map from (a) to (b).

Now we show that the map is injective. Suppose $M^1_\hbar,M^2_\hbar$ are two lattices such that $(M^1_\hbar)_{\dagger,\chi^i}=(M^2_\hbar)_{\dagger,\chi^i}$, an equality of lattices in
$M_{\dagger,\chi^i}[\hbar^{\pm 1}]$ for all $i=1,\ldots,k$. Since $\bullet_{\dagger,\chi^i}$
is exact, it sends $(M^1_\hbar\cap M^2_\hbar)_{\dagger,\chi^i}$ to
$(M^1_\hbar)_{\dagger,\chi^i}\cap (M^2_\hbar)_{\dagger,\chi^i}=M^j_{\hbar,\dagger,\chi^i}$
for $j=1,2$. It follows that $\bullet_{\dagger,\chi^i}$ kills
$M^j_\hbar/(M^1_\hbar\cap M^2_\hbar)$. So $M^1_\hbar, M^2_\hbar$ are comparable.

Now we prove the surjectivity.
Choose graded HC $\A_{\hbar,\dagger}$-lattices $\underline{M}'^i_\hbar\subset M_{\dagger,\chi^i}[\hbar^{\pm 1}]$. We need
to prove that there is a graded $K$-stable $\A_\hbar$-lattice $M'_\hbar\subset M[\hbar^{\pm 1}]$
with $M'_{\hbar,\dagger,\chi^i}=\underline{M}'^i_\hbar$. Pick some lattice $M_\hbar
\subset M[\hbar^{\pm 1}]$. And set $\underline{M}^i_\hbar:=M_{\hbar,\dagger,\chi^i}$.

Let $\varphi_i$
denote the natural map $M_\hbar[\hbar^{-1}]\rightarrow M_\hbar^{\wedge_{\chi_i}}[\hbar^{-1}]$.
Let $L_i:=T_{\chi^i}\Orb^i_K$, so that $M_\hbar^{\wedge_{\chi_i}}$ decomposes as
$\C[[L_i,\hbar]]\otimes_{\C[[\hbar]]}\underline{M}^{i,\wedge_0}_{\hbar}$, where $\underline{M}^{i,\wedge_0}_\hbar$
stands for the $\hbar$-adic completion of $\underline{M}^i_\hbar$.
Set $M_\hbar'^{i}:=\C[[L_i,\hbar]]\otimes_{\C[[\hbar]]}\underline{M}^{i,\wedge_0}_\hbar$.
This is an $\A_\hbar^{\wedge_{\chi^i}}$-lattice
in $M_\hbar^{\wedge_{\chi_i}}[\hbar^{\pm 1}]$. It is $\C^\times$-stable by the construction.

We claim that it is
HC in the sense to be explained now. Let
$\mathcal{J}^U_\hbar, \mathcal{J}^\Weyl_\hbar, \mathcal{J}^{\Walg}_\hbar$
denote the analogs of $\J_\hbar\subset \U_\hbar$ in $\U_\hbar^{\wedge_{\chi^i}}, \Weyl_\hbar^{\wedge_0},
\Walg_\hbar^{\wedge_0}$. Then
\begin{equation}\label{eq:ideal_decomp}
\J_\hbar^\U=\Weyl_\hbar^{\wedge_0}\widehat{\otimes}_{\C[[\hbar]]}\mathcal{J}^{\Walg}_\hbar+
\mathcal{J}^\Weyl_\hbar\widehat{\otimes}_{\C[[\hbar]]}\Walg_\hbar^{\wedge_0}.
\end{equation}
When we say that a lattice in $M_\hbar^{\wedge_{\chi_i}}[\hbar^{-1}]$ is HC, we mean that it is
stable under the Lie algebra action of $\mathcal{J}^U_\hbar$. The lattice $\C[[L_i,\hbar]]$
in its localization is stable under $\mathcal{J}^\Weyl_\hbar$. Thanks to
(\ref{eq:ideal_decomp}), $M_\hbar^{\wedge_{\chi_i}}\subset M_\hbar^{\wedge_{\chi_i}}[\hbar^{-1}]$
is stable under $\J^\U_\hbar$ if and only if
$\underline{M}'^i_\hbar\subset \underline{M}'^i_\hbar[\hbar^{-1}]$
is stable under $\J^\Walg_\hbar$. The latter holds by assumption, establishing the claim in the beginning of the paragraph.

Note that there is $f>0$
such that $\hbar^{f}M_\hbar^{\wedge_{\chi_i}}\subset M_\hbar'^i\subset \hbar^{-f}M_\hbar^{\wedge_{\chi_i}}$ for all $i$ (as both $M_\hbar^{\wedge_{\chi_i}},
M_\hbar'^i$ are lattices). Set
\begin{equation}\label{eq:interm_lattice}
\tilde{M}_\hbar^i:=\hbar^{-f}M_\hbar\cap \varphi^{-1}_i(M_\hbar'^i).
\end{equation}
This is a $\mathcal{J}_\hbar$- (hence $\kf$-) stable and $\C^\times$-stable $\A_\hbar$-lattice in $M_\hbar[\hbar^{-1}]$.
Note that, since the completion functor intertwines the intersections, we have
\begin{equation}\label{eq:interm_lattice4}
(\tilde{M}_\hbar^i)^{\wedge_{\chi_i}}=M'^i_\hbar,
\end{equation}
Also,
\begin{equation}\label{eq:interm_lattice1}(\tilde{M}_\hbar^i)^{\wedge_{\chi_j}}\subset \hbar^{-e}M'^j_\hbar
\end{equation}
for some fixed
integer $e$ and all $j\neq i$.

Now we construct a lattice $M'_\hbar\subset M_\hbar[\hbar^{-1}]$ from the lattices
$\tilde{M}_\hbar^i$. Let $J_i$ denote the ideal of all elements in $S(\g)=\U_\hbar/(\hbar)$ that
vanish on $\bigcup_{j\neq i}\Orb_K^j$.
Note that $\hat{J}_i$ is $K$- and $\C^\times$-stable two-sided ideal. Also note
\begin{equation}\label{eq:interm_lattice3}
\hat{J}_iM_\hbar'^i=M_\hbar'^i
\end{equation}
because we can find an element of $J_i$
equal to $1$ at $\chi^i$. On the other hand, there is $r>0$ such that
\begin{equation}\label{eq:interm_lattice2}
\hat{J}_i^r M_\hbar'^j\subset \hbar M_\hbar'^j
\end{equation}
for all $j\neq i$. Let $\hat{I}_i$ denote the left ideal in $\A_\hbar$ generated
by $\hat{J}_i^{er}$, where $e$ is as in (\ref{eq:interm_lattice1}),
$r$ is as in (\ref{eq:interm_lattice2}). (\ref{eq:interm_lattice3}) gives
\begin{equation}\label{eq:interm_lattice3a}
\hat{I}_iM_\hbar'^i=M_\hbar'^i,
\end{equation}
while from (\ref{eq:interm_lattice2}) we get
\begin{equation}\label{eq:interm_lattice2a}
\hat{I}_i M_\hbar'^j\subset \hbar^e M_\hbar'^j
\end{equation}

We set
$$M'_\hbar:=\hbar^{f} M_\hbar+\sum_{i=1}^\ell \hat{I}_i\tilde{M}_\hbar^i,$$
where $f$ is as in (\ref{eq:interm_lattice}). This $\A_\hbar$-submodule is $K$-stable and
graded by the construction. It generates $M_\hbar[\hbar^{-1}]$ as an
$\A_\hbar[\hbar^{-1}]$-module because it contains $\hbar^f M_\hbar$. It is a lattice
because $M_\hbar$ and all $\tilde{M}_\hbar^i$ are lattices. We have $\tilde{M}_\hbar^{\wedge_{\chi_i}}=M'^i_\hbar$ for all $i$
because
\begin{itemize}
\item
the completion functor is exact and so intertwines the sums,
\item $(\hat{I}_i\tilde{M}_\hbar^i)^{\wedge_{\chi_i}}=M_\hbar'^i$ by (\ref{eq:interm_lattice4}) combined with (\ref{eq:interm_lattice3a}),
\item $(\hbar^d M_\hbar)^{\wedge_{\chi^i}}\subset M_\hbar'^i$
by (\ref{eq:interm_lattice}),
\item  $(\hat{I}_j\tilde{M}_\hbar^i)^{\wedge_{\chi_i}}\subset M_\hbar'^i$  by (\ref{eq:interm_lattice2a}) combined with (\ref{eq:interm_lattice1}).
\end{itemize}
So we have constructed a lattice $M'_\hbar\subset M_\hbar[\hbar^{-1}]$ with required properties.
This finishes the proof of the surjectivity, and hence of the proposition.
\end{proof}

\subsection{Fd-comparable filtrations}
In this section we consider the following situation. Let $\Orb_K$ be a
nilpotent $K$-orbit in $(\g/\kf)^*$. Pick $\chi\in \Orb_K$.
Let $S$ denote a $\theta$-stable Slodowy slice at $\chi$. Consider the algebra $\A_{\dagger}$. Take $N\in \A_\dagger\operatorname{-mod}^{\Z/d\Z}$. Suppose that $N$ does not have
nonzero finite dimensional $\A_\dagger$-submodules.

\begin{defi}
Two  graded  $\A_{\hbar,\dagger}$-lattices $N_\hbar^{1}$ and $N_\hbar^{2}$ are said to be {\it fd-comparable} if $N_\hbar^1/(N_\hbar^1\cap N_\hbar^2), N_\hbar^2/(N_\hbar^1\cap N_\hbar^2)$
are finite dimensional.
\end{defi}

Note that being fd-comparable is an equivalence relation, compare to Lemma \ref{Lem:compar_properties}, so we can talk about the fd-comparability classes of lattices.

The goal of this section is to prove the following result that will be used in the proof of
Theorem \ref{Thm:max_filtration}.

\begin{Prop}\label{Prop:max_filtr_dim1}
Recall that we assume that $N$ does not have finite dimensional submodules.
In each fd-comparability class, there is the unique maximal (w.r.t. inclusion) element.
\end{Prop}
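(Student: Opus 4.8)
The strategy is to produce the maximal lattice as a sum of all lattices in the given fd-comparability class and to show that this sum is still a lattice (equivalently, that the associated filtration is good, i.e. $\gr$ is finitely generated). First I would fix a reference lattice $N_\hbar$ in the class and, for any other lattice $N'_\hbar$ in the class, observe that $N_\hbar+N'_\hbar$ is again in the class: indeed $(N_\hbar+N'_\hbar)/N_\hbar\cong N'_\hbar/(N_\hbar\cap N'_\hbar)$ is finite dimensional, so the dimension of the support of the quotients does not grow, exactly as in Lemma \ref{Lem:compar_properties}. Hence the candidate maximal element is $N^{max}_\hbar:=\sum_{N'_\hbar} N'_\hbar$, the (possibly infinite) sum over all lattices in the class; it is automatically graded, $\C^\times$-stable and $\A_{\hbar,\dagger}$-stable, and once we know it is a lattice it is tautologically the unique maximal one.

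\textbf{Key steps.} The content is therefore to bound $N^{max}_\hbar$ from above by an honest lattice. Pick any $N_\hbar$ in the class; I claim there is a fixed integer $m$ with $N'_\hbar\subset \hbar^{-m}N_\hbar$ for every $N'_\hbar$ in the class, which forces $N^{max}_\hbar\subset\hbar^{-m}N_\hbar$ and hence $N^{max}_\hbar$ is finitely generated over the Noetherian algebra $\A_{\hbar,\dagger}$, i.e. a lattice. To get the bound, note that $N'_\hbar/(N_\hbar\cap N'_\hbar)$ is a finite dimensional $\A_{\dagger,\hbar}/(\hbar)$-module (up to $\hbar$-power it is annihilated by $\hbar$), so its length — and therefore the length of $\hbar^{j}N'_\hbar/(\hbar^{j}N'_\hbar\cap N_\hbar)$, which stabilizes as $j\to\infty$ — controls how far $N'_\hbar$ sticks out of $N_\hbar$ after localizing. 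Concretely, passing to $N:=N_\hbar/(\hbar-1)$ and the corresponding good filtrations, fd-comparability of two filtrations means the two filtrations differ by a uniformly bounded amount \emph{on the part of $N$ of full $\gr$-support}; the hypothesis that $N$ has no finite dimensional $\A_\dagger$-submodule is what rules out the pathology of Remark \ref{Rem:no_filtr} and guarantees that a filtration "pushed out to infinity" on a finite dimensional piece cannot remain good — so any good filtration fd-comparable to a fixed one is squeezed between $\hbar^{\pm m}N_\hbar$ for $m$ depending only on $\dim\bigl(N_\hbar/(N_\hbar\cap N'_\hbar)\bigr)$, and that dimension is in turn bounded because all such finite dimensional quotients embed into $N/(\text{full-support submodule})$, whose length is finite. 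Assembling: $N^{max}_\hbar\subset\hbar^{-m}N_\hbar$, it is a lattice, it contains every member of the class, done.

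\textbf{Main obstacle.} The delicate point is the uniform bound $m$: a priori different lattices in the class could protrude from $N_\hbar$ by unbounded $\hbar$-powers even while each individual quotient is finite dimensional, if the dimensions of those quotients were unbounded. The resolution uses that $N$ has no nonzero finite dimensional $\A_\dagger$-submodule together with finite length of $N$ as an $\A_\dagger$-module (which follows since $M\in\HC^\partial_\Orb(\A)^{K,\kappa}$ and $\bullet_{\dagger,\chi}$ has finite dimensional... no: $N$ itself need not be finite dimensional, but it \emph{is} finite length as $\A_\dagger$-module): every finite dimensional subquotient of the form $N'_\hbar/(N_\hbar\cap N'_\hbar)$ corresponds, after $\hbar\mapsto 1$, to a finite dimensional subquotient of $N$, and the sum of all finite length contributions away from the maximal-support stratum is bounded by the length of $N$. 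I would therefore spend the bulk of the argument establishing this finiteness-of-length statement for $N$ and deducing the uniform $m$ from it; the rest is the formal sum-is-a-lattice verification.
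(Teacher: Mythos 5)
Your overall strategy (sum of all lattices in the class, bounded above by $\hbar^{-m}N_\hbar$ for a uniform $m$) is a reasonable reformulation of the statement, but the key step --- the existence of the uniform bound $m$ --- is exactly the content of the proposition, and your justification for it does not go through. Two concrete problems. First, the quotient $N'_\hbar/(N_\hbar\cap N'_\hbar)$ is $\hbar$-torsion, so setting $\hbar=1$ kills it; it does not ``correspond to a finite dimensional subquotient of $N$,'' and there is no natural embedding of these quotients into $N/(\text{full-support submodule})$. Second, you invoke finite length of $N$ over $\A_\dagger$, which is not among the hypotheses of the proposition (only finite generation and absence of finite dimensional submodules are assumed) and need not hold: a module with one-dimensional support can have infinite length. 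The sentence asserting that the no-finite-dimensional-submodules hypothesis ``guarantees that a filtration pushed out to infinity cannot remain good'' is the proposition itself restated, not an argument.

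The mechanism the paper uses to extract a contradiction is the missing idea. One forms the increasing chain $N_\hbar=N_\hbar^0\subset N_\hbar^1\subset\cdots$, where $N_\hbar^k$ is the maximal comparable lattice inside $\hbar^{-k}N_\hbar$ (a finite sum by Noetherianity, so this part is fine and close to what you do). The point is then to show that multiplication by $\hbar$ induces embeddings $\hbar N_\hbar^{i+1}/\hbar N_\hbar^{i}\hookrightarrow N_\hbar^{i}/N_\hbar^{i-1}$, so the finite dimensional successive quotients stabilize; and if they stabilize at something nonzero, the intersection $K_\hbar=\bigcap_i \hbar^i N_\hbar^i$ satisfies $K_\hbar/\hbar K_\hbar\cong\varprojlim N_\hbar^{i+1}/N_\hbar^i\neq 0$, and $K_\hbar/(\hbar-1)K_\hbar$ is then a nonzero finite dimensional $\A_\dagger$-submodule of $N$, contradicting the hypothesis. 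Without this inverse-limit/intersection construction (or an equivalent device), your argument has no way to convert ``each quotient is finite dimensional'' into ``the chain terminates,'' since the quotients could a priori remain nonzero of constant dimension forever. You would need to supply this step to complete the proof.
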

\begin{proof}
Pick a graded $\A_{\hbar,\dagger}$-lattice $N_\hbar$. For each $k\geqslant 0$, we have the unique lattice
fd-comparable with $N_\hbar$ and maximal among all fd-comparable lattices contained in
$\hbar^{-k}N_\hbar$: the sum of all fd-comparable lattices, which coincides with the sum of finitely many of them because $\A_{\hbar,\dagger}$ is a Noetherian algebra (it is a finitely generated module over the
Noetherian algebra $\Walg_\hbar$). Denote this maximal lattice by $N_\hbar^k$.
So we have the increasing chain of fd-comparable lattices
$$N_\hbar=N^0_\hbar\subset N^1_\hbar\subset\ldots\subset N^k_\hbar\subset\ldots$$
and the claim of the proposition amounts to showing that this chain terminates.
The proof of this is in several steps.

{\it Step 1}. We claim that for each $i$ we have
\begin{align}\label{eq:containment1}
&\hbar N_\hbar^{i+1}\subset N_\hbar^i,\\\label{eq:containment2}
&N_\hbar^i=N_\hbar^{i+1}\cap \hbar^{-i}N_{\hbar}.
\end{align}
To show (\ref{eq:containment1}) observe that $\hbar N_{\hbar}^{i+1}+N_\hbar$ is
fd-comparable to $N_\hbar$ by the direct analog of (1) of Lemma \ref{Lem:compar_properties} and is  contained in $\hbar^{-i}N_\hbar$. (\ref{eq:containment2})
is similar: we use the direct analog of (2) of Lemma \ref{Lem:compar_properties}.

{\it Step 2}. We claim that the inclusions $\hbar N_{\hbar}^{i+1}\subset N_\hbar^i, \hbar N_{\hbar}^{i}\subset N_\hbar^{i-1}$ induce an embedding
\begin{equation}\label{eq:containment3}
\hbar N_\hbar^{i+1}/\hbar N_{\hbar}^i\hookrightarrow N_{\hbar}^i/N_{\hbar}^{i-1}, \forall i\geqslant 1.
\end{equation}
Indeed, thanks to (\ref{eq:containment1}), we have
$$N_\hbar^{i+1}/N_\hbar^i=N_\hbar^{i+1}/(N_\hbar^{i+1}\cap \hbar^{-i}N_{\hbar})
\hookrightarrow (N_{\hbar}^{i+1}+\hbar^{-i}N_\hbar)/(\hbar^{-i}N_\hbar).$$
The multiplication by $\hbar$ identifies the latter space with
$(\hbar N_{\hbar}^{i+1}+\hbar^{1-i}N_\hbar)/(\hbar^{1-i}N_\hbar)$. Thanks
to (\ref{eq:containment1}), this space includes into
$$(N_{\hbar}^{i}+\hbar^{1-i}N_\hbar)/(\hbar^{1-i}N_\hbar)=N_\hbar^{i}/N_{\hbar}^{i-1},$$
where the equality is thanks to (\ref{eq:containment2}) (with $i$ replaced by $i-1$).

{\it Step 3}. Note that all spaces $N_\hbar^{i+1}/N_{\hbar}^i$ are finite dimensional
by the definition of fd-comparable lattices. Thanks to (\ref{eq:containment3}), the sequence
$N_{\hbar}^{i+1}/N_{\hbar}^i$ stabilizes. We need to show that it stabilizes to $0$.

Assume the contrary.
Note that all spaces $N_{\hbar}^{i+1}/N_\hbar^i$ are graded, and (\ref{eq:containment3})
is homogeneous.
Consider the sequence $\hbar^{i}N_\hbar^{i}\subset N_\hbar$, it is decreasing by  (\ref{eq:containment1}). Let $K_\hbar$ denote the intersection $\bigcap_{i\geqslant 0}\hbar^{i}N_\hbar^{i}$.

We claim that
\begin{equation}\label{eq:containment4}
K_\hbar/\hbar K_\hbar\xrightarrow{\sim} \varprojlim N_\hbar^{i+1}/N_{\hbar}^i.
\end{equation}
Indeed, take $j$ such that the maps
(\ref{eq:containment3}) are isomorphisms for $i>j$. This is equivalent to
$N^{i}_\hbar=N^{i-1}_\hbar+\hbar N^{i+1}_\hbar$ for all $i>j$,
equivalently
\begin{equation}\label{eq:containment5}
\hbar^i N^i_\hbar=\hbar^{i-j}(\hbar^jN^{j}_\hbar)+ \hbar^{i+1}N^{i+1}_\hbar, \forall i>j.
\end{equation}
 Now take
a homogeneous element $n\in N_\hbar^{j+1}/N_{\hbar}^j$ of degree $f$, and let
$f'$ be the minimal degree in $N^j_\hbar$ (that exists because $N^j_\hbar$ is a finitely
generated module over the positively graded algebra $\Walg_\hbar$). Take the component of degree $i$
in (\ref{eq:containment5}) for $i-j>f-f'$, and get $(\hbar^i N^i_\hbar)_{f}=
(\hbar^{i+1}N^{i+1}_\hbar)_{f}$. This implies that (\ref{eq:containment4})
is an isomorphism on the elements of degree $f$. So, it is an isomorphism.

{\it Step 4}.
In particular, $K_\hbar$ is a nonzero graded $\A_{\hbar,\dagger}$-submodule in $N_\hbar$ that is supported at the
point $\chi\in S\cap (\g/\kf)^*$. Then $K_\hbar/(\hbar-1)K_\hbar$ is a nonzero finite dimensional
$\A_{\dagger}$-submodule of $N$. This contradiction shows that (\ref{eq:containment3}) stabilizes
at $0$ and finishes the proof.
\end{proof}

\subsection{Proof of Theorem \ref{Thm:max_filtration}}\label{SS_thm_normalized_proof}
We start the following result that will be used in the proof.

\begin{Lem}\label{Lem:no_fin_dim_subs}
Let $M\in \HC_{\Orb}^\partial(\A)^{K,\kappa}$. Pick a point $\chi\in \V(M)\setminus \Orb$. Then $M_{\dagger,\chi}$ has no nonzero finite dimensional $\A_{\dagger,\chi}$-submodules.
\end{Lem}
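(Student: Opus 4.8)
The plan is to reduce the statement to a support-dimension computation combined with the key property that the restriction functor $\bullet_{\dagger,\chi}$ kills modules whose support misses $\Orb_K$. Suppose, for contradiction, that $M_{\dagger,\chi}$ has a nonzero finite-dimensional $\A_{\dagger}$-submodule $L$. Choose a $K$-stable good filtration on $M$ and form the lattice $M_\hbar = R_\hbar(M)$; its restriction $M_{\hbar,\dagger,\chi}$ is a graded $\A_{\hbar,\dagger}$-lattice in $M_{\dagger,\chi}[\hbar^{\pm 1}]$, and by construction $M_{\hbar,\dagger,\chi}/(\hbar-1) \cong M_{\dagger,\chi}$. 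I would first lift $L$ to a nonzero graded $\A_{\hbar,\dagger}$-submodule $L_\hbar \subset M_{\hbar,\dagger,\chi}$ with $L_\hbar/(\hbar-1) = L$ and with $L_\hbar$ finitely generated over $\C[\hbar]$ (hence supported, as an $\A_{\hbar,\dagger}$-module, on the single point $\chi \in S \cap (\g/\kf)^*$ together possibly with $\{\hbar = 0\}$ directions); the point is that a finite-dimensional $\A_\dagger$-submodule corresponds to an $\A_{\hbar,\dagger}$-submodule whose support is contained in $\{\chi\} \times \C$.

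The heart of the argument is to transport $L_\hbar$ back to a submodule of $M_\hbar$ and derive a contradiction with $M \in \HC^\partial_\Orb(\A)^{K,\kappa}$. Here I would invoke the compatibility of $\bullet_{\dagger,\chi}$ with the lattice constructions from Proposition \ref{Prop:lattices_classification}: since $\bullet_{\dagger,\chi}$ is exact and its essential image detects exactly the part of the support meeting $\Orb_K$, a nonzero submodule at $\chi$ in $M_{\hbar,\dagger,\chi}$ forces the existence of a nonzero $\A_\hbar$-submodule $M'_\hbar \subseteq M_\hbar$ whose support meets $\overline{\Orb_K^\chi}$ but has dimension strictly smaller than $\dim\V(M) = \frac12 \dim \Orb$ — this is because $\chi \in \V(M)\setminus\Orb$, so the $K$-orbit $\Orb_K^\chi$ through $\chi$ has dimension $< \frac12\dim\Orb$, while by (\ref{eq:support_restriction}) the support of a restricted submodule sits inside $S$, forcing its preimage to be supported on $\overline{K\chi}$-type strata of dimension $\le \dim \Orb_K^\chi < \dim\V(M)$. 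Passing to $\hbar = 1$, $M'_\hbar/(\hbar-1)$ is a nonzero submodule of $M$ whose associated variety is contained in $\overline{\Orb}\setminus\Orb$ (indeed in $\overline{K\chi} \subsetneq \overline\Orb$), contradicting the defining property of $\HC^\partial_\Orb$.

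Concretely, the cleanest route is: (i) reduce to showing that a nonzero $\A_{\hbar,\dagger}$-submodule of $M_{\hbar,\dagger,\chi}$ supported at $\chi$ yields a nonzero $\A_\hbar$-submodule of $M_\hbar$ of support dimension $< \dim \V(M)$; for this, mimic the surjectivity construction in the proof of Proposition \ref{Prop:lattices_classification}, building from $L_\hbar$ an intermediate lattice $\widetilde M_\hbar$ via $\widetilde M_\hbar = M_\hbar \cap \varphi^{-1}(\C[[L_i,\hbar]]\widehat\otimes L_\hbar^{\wedge_0})$, and noting that $M_\hbar / \widetilde M_\hbar$ has smaller support than $\V(M)$ while $\widetilde M_\hbar^{\wedge_\chi}$ sees the submodule $L_\hbar$; then a suitable saturated submodule of $\widetilde M_\hbar$ does the job. (ii) Take $K$-saturation/grading closure of the resulting submodule if needed — it is automatically $K$-stable since all the operations are $K$- and $\C^\times$-equivariant. (iii) Specialize at $\hbar = 1$ and conclude $\V(M'_\hbar/(\hbar-1)) \subseteq \overline{K\chi} \subseteq \overline\Orb \setminus \Orb$, contradiction.

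The main obstacle I anticipate is step (i): one must be careful that the submodule of $M_\hbar$ produced is genuinely nonzero after specialization at $\hbar = 1$ and genuinely has support dimension $< \dim\V(M)$ rather than, say, all of $\V(M)$ with $L_\hbar$ arising as a "spurious" artifact of the completion. This is handled by the exactness of $\bullet_{\dagger,\chi}$ together with (\ref{eq:support_restriction}): if the constructed submodule $M'_\hbar$ had support of full dimension $\dim\V(M)$, then some open $K$-orbit $\Orb^j_K$ of $\V(M)$ would lie in $\Supp(M'_\hbar)$, and then $(M'_\hbar)_{\dagger,\chi^j} \ne 0$ is a proper sublattice phenomenon — but one checks $(M'_\hbar)_{\dagger,\chi^j}$ is all of $(\widetilde M_\hbar)_{\dagger,\chi^j}$ up to fd-comparability, so $M_\hbar/M'_\hbar$ would be killed by $\bullet_{\dagger,\chi^j}$, contradicting full support. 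Keeping track of which completions are nonzero where is the delicate bookkeeping, but it is entirely parallel to the four-bullet verification at the end of the proof of Proposition \ref{Prop:lattices_classification}.
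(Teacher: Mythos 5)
There is a genuine gap at the heart of your argument: the step that transports a nonzero finite-dimensional submodule of $M_{\dagger,\chi}$ back to a nonzero submodule of $M_\hbar$ (hence of $M$) with boundary support is not justified, and the construction you sketch does not achieve it. The preimage $\varphi^{-1}\bigl(\C[[L,\hbar]]\widehat\otimes L_\hbar^{\wedge_0}\bigr)\cap M_\hbar$ contains the entire kernel of the completion map $M_\hbar\rightarrow M_\hbar^{\wedge_\chi}$, which consists of elements supported away from $\chi$ and can perfectly well have full-dimensional support; so this submodule is not small, and no amount of ``saturation'' is shown to cut it down to something that is simultaneously nonzero and supported on $\overline{K\chi}$. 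Moreover, the bookkeeping of Proposition \ref{Prop:lattices_classification} that you invoke is set up only for the points $\chi^i$ lying in the \emph{open} $K$-orbits of $\V(M)$, where $M_{\dagger,\chi^i}$ is finite dimensional by (\ref{eq:support_restriction}); here $\chi\in\V(M)\setminus\Orb$ lies in a smaller orbit, $M_{\dagger,\chi}$ is infinite dimensional, and the classification of lattices up to comparability says nothing about what happens over such boundary points. More fundamentally, exactness of $\bullet_{\dagger,\chi}$ plus the support formula is simply not enough: the restriction functor has no adjoint or fullness property that would let you pull a submodule of $M_{\dagger,\chi}$ back to a submodule of $M$, so some additional input is unavoidable.

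The paper supplies that input from two sources you do not use. First, it reduces to $\A=\U$ and takes the finite-dimensional submodule $N\subset M_{\dagger,\chi}$ to be irreducible; by the classification of finite-dimensional $\Walg$-modules (\cite[Theorem 1.2.2]{HC}) there is a two-sided ideal $\J\subset\U$ with $\V(\U/\J)=\overline{G\chi}$ such that $\J_\dagger$ annihilates $N$, whence $\Hom_{\Walg}((\U/\J)_{\dagger,\chi},M_{\dagger,\chi})\neq\{0\}$. Second, Lemma \ref{Lem:HC_hom_restr_compat} converts this into $\Hom_{\U}(\U/\J,M)\neq 0$; the image of a nonzero such homomorphism is a nonzero submodule of $M$ killed by $\J$, hence supported on $\overline{G\chi}\cap(\g/\kf)^*\subset\overline{\Orb}\setminus\Orb$, contradicting $M\in\HC^\partial_\Orb$. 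In other words, the bridge back from the W-algebra side to the $\U$-side is the bimodule-Hom compatibility of the restriction functor, applied to a bimodule produced by the primitive-ideal/finite-dimensional-module correspondence --- not a lattice or completion computation. If you want to salvage your outline, you would need to replace step (i) by an argument of this kind.
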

\begin{proof}
It is enough to prove this claim for $\A=\U$.

Assume the contrary, let $N$ be a nonzero finite dimensional $\Walg$-submodule of $M_{\dagger,\chi}$.
Let $\Orb':=G\chi$. We can assume
that $N$ is irreducible. Using \cite[Theorem 1.2.2]{HC}, we see that there is an ideal
$\J\subset \U$ with $\V(\U/\J)=\overline{\Orb}'$ such that $\J_{\dagger}$ annihilates $N$.
It follows that $\Hom_{\Walg}((\U/\J)_{\dagger,\chi}, M_{\dagger,\chi})\neq \{0\}$.
By Lemma \ref{Lem:HC_hom_restr_compat} (applied to HC bimodules/modules rather than weakly
HC ones), we have $\Hom_{\U}(\U/\J,M)\neq 0$, equivalently,
the annihilator of $\J$ in $M$ is nonzero. This annihilator must be supported on $\overline{\Orb}'\cap (\g/\kf)^*$, which is impossible because $M\in \HC^\partial_{\Orb}(\g,K)$.
\end{proof}

\begin{proof}[Proof of Theorem \ref{Thm:max_filtration}]
First, we prove that a maximal graded $K$-stable lattice in the comparability class of $M_\hbar$ exists
(the uniqueness is clear). As in the proof of Proposition \ref{Prop:max_filtr_dim1}, we have the maximal such lattice contained in $\hbar^{-i}M_\hbar$, denote it by $M_\hbar^i$. We need to show that the sequence $(M_\hbar^i)$ stabilizes.

Let $\chi\in \V(M)\setminus \Orb$. We prove that $(M_\hbar^i)_{\dagger,\chi}$ terminates by induction on
$\dim \V(M)-\dim K\chi$. The base case is when $\dim \V(M)-\dim K\chi=1$. Here the lattices $M^i_{\hbar,\dagger,\chi}\subset M_{\dagger,\chi}[\hbar^{\pm 1}]$ are fd-comparable. Thanks to Lemma
\ref{Lem:no_fin_dim_subs}, $M_{\dagger,\chi}$ satisfies the assumptions of Proposition \ref{Prop:max_filtr_dim1} and we are done by this proposition.

Now suppose that $M^i_{\hbar,\dagger,\chi}$ stabilizes for all $\chi$ with
\begin{equation}\label{eq:dim_induction}\dim \V(M)-\dim K\chi<\ell\end{equation}
(with some $\ell$). As we only have finitely many orbits $K\chi$, we can assume that there is $j$ with
$M^i_{\hbar,\dagger,\chi}=M^j_{\hbar,\dagger,\chi}$ for all $i>j$ and all
$\chi$ satisfying (\ref{eq:dim_induction}). It follows that the support of $M^i_{\hbar}/M^j_\hbar$
has dimension at most $\dim \V(M)-\ell$. So, for $\chi$ with $\dim \V(M)-\dim K\chi=\ell$, the
lattices $M^i_{\hbar,\dagger,\chi}\subset M_{\dagger,\chi}[\hbar^{\pm 1}]$ with $i\geqslant j$
are fd-comparable. Similarly to the previous paragraph, this sequence stabilizes. This
establishes the induction step and finishes the proof of the existence of a maximal lattice
in a given comparability class.

Now we prove the equivalence of (1) and (2). First, suppose that $M_\hbar$ is maximal in its comparability class. Let $N\subset \gr M$ be a $\C[\g^*]$-submodule such that $\dim \operatorname{Supp}(N)<\dim \V(M)$.
We can choose it to be maximal of all submodules with this property. Then it must be $\C^\times$-stable
as the action of $\C^\times$ maps one submodule with the required property into another.
It also must be $K$-stable for a similar reason. The preimage of $N$ in $\hbar^{-1}M_\hbar$ under the projection
to $\hbar^{-1}M_\hbar/M_\hbar\cong \gr M$ is comparable with $M_\hbar$, a contradiction.

Now suppose that $\gr M$ has no submodules with dimension of support strictly less than $\dim \V(M)$.
Suppose $M_\hbar$ is not maximal. Then we can find a comparable graded $K$-stable lattice $\tilde{M}_\hbar$ between $M_\hbar$ and $\hbar^{-1}M_\hbar$. Then $\tilde{M}_\hbar/M_\hbar$
is a submodule in $\hbar^{-1}M_\hbar/M_{\hbar}\cong \gr M$ whose support has dimension strictly
less than $\dim \V(M)$. A contradiction.
\end{proof}

\subsection{A question of Vogan}
The goal of this section is to use Theorem \ref{Thm:max_filtration} to prove Theorem
\ref{Thm:codim1_connected} from Introduction.

\begin{proof}[Proof of Theorem \ref{Thm:codim1_connected}]
Assume the contrary, $X:=\V(M)\setminus \V(M)^{\geqslant 2}$ splits as a disjoint union of two closed nonempty subvarieties $X_1$ and $X_2$.

Thanks to \cite[Lemma 2.5]{LY}, we can find
\begin{itemize}
\item
a connected reductive group $K$ with a homomorphism to $G$
that identifies the Lie algebra of $K$ with $\kf$,
\item and a character $\kappa$ of $\kf$
\end{itemize}
so that $M$ becomes a strongly $(K,\kappa)$-equivariant $\U$-module.

Equip $M$ with a $K$-stable good filtration that is maximal in its comparability class. Let $I$ denote
the annihilator of $\gr M$ in $S(\g)$ and $A:=S(\g)$. By condition (2) of Theorem \ref{Thm:max_filtration}, a non-zero divisor of $A$ is also a non-zero divisor in $\gr M$.
Let $(\gr M)^0$ denote the restriction of $\gr M$ to $X_1$: formally it is
the localization of $A$-module $\gr M$ to the open subset $\operatorname{Spec}(A)\setminus [\V(M)^{\geqslant 2}\sqcup X_2]$. The key observation is that $\Gamma((\gr M)^0)$ is finitely generated.
Indeed, the locus in $\operatorname{Spec}(A)$, where $\gr M$ fails to be Cohen-Macaulay
has codimension greater than $1$. This locus has to be $K$-stable because of the $K$-action
on $\gr M$. So it is contained in $\V(M)^{\geqslant 2}$. Hence $(\gr M)^0$ is Cohen-Macaulay.
The push-forward of a maximal Cohen-Macaulay coherent sheaf from an open subset whose complement has codimension at least $2$ is coherent. Moreover, the support of $\Gamma((\gr M)^0)$ is contained in $\overline{X}_1$.

Then the argument proceeds as in the proof of \cite[Theorem 4.6]{Vogan}. We can microlocalize $M$
to a sheaf on $\V(M)$, in particular, consider the restriction $M^0$ of this sheaf to $X_1$. This sheaf carries a complete and separated $K$-stable filtration whose associated graded is $(\gr M)^0$. It follows that $\gr\Gamma(M^0)\hookrightarrow \Gamma((\gr M)^0)$, hence is
a finitely generated $S(\g^*)$-module. So $\Gamma(M^0)$ carries a good filtration, hence is a HC $(\g,K,\kappa)$-module. By the construction, its associated variety is contained in the support
of  $\Gamma((\gr M)^0)$, which, in its turn, is contained in $\overline{X}_1$.

On the other hand, by the construction, we have a $\U$-module homomorphism $M\rightarrow M^0$,
which is nonzero because $X_1\subset \V(M)$. So we get a nonzero homomorphism $M\rightarrow \Gamma(M^0)$.
Since $M$ is simple, this homomorphism has to be injective. But this leads to a contradiction:
as we have seen above in this proof, $\V(M)\not\subset X_1$. We have proved that $\V(M)\setminus \V(M)^{\geqslant 2}$ is connected.
\end{proof}

\subsection{Normalized and canonical filtrations}
Recall the integer $d:=2|\ker\iota|$, where $\iota$ is the homomorphism $K\rightarrow G$.

Let $\A$ be a Dixmier algebra. We assume that it is equipped with a good filtration subject to certain conditions. Namely, fix a nilpotent orbit $\Orb\subset \g^*$, take a point $\chi\in \Orb$ and let $S$ be a
Slodowy slice through $\chi$. The restriction $(\gr\A)|_S$ has a natural grading, it is induced 
by the action $t\mapsto t^{-d}\gamma(t)$.

Here are the two conditions on $\A$ that we need:
\begin{itemize}
\item[(a)] $\gr\A$ is commutative,
\item[(b)] $(\gr\A)|_S$ is positively graded (meaning that $(\gr\A)|_{S,i}$ is zero for $i<0$
and is 1-dimensional for $i=0$). Moreover, $(\gr\A)_{S,i}=\{0\}$ for $0<i<d$.
\end{itemize}

For example, $\A=\U$ satisfies both conditions.
Another example is when $\A$ is a quantization of $\C[\Orb]$ (or of $\C[\tilde{\Orb}]$, where
$\tilde{\Orb}$ is an equivariant cover of $\Orb$ such that the scaling action of $\C^\times$
on $\Orb$ lifts to $\tilde{\Orb}$). As was noted in Section \ref{SS_Dixmier_generalization},
$(\gr\A)_S$ is concentrated in degree $0$.

%Suppose that, for each $\ell$, there is a one-parameter subgroup $\gamma^i:\C^\times\rightarrow K$
%corresponding to the semisimple element $dh^i/2\in \kf$, where $h^i$ is the semisimple element of  an %$\slf_2$-triple corresponding to $\chi^i$.

%For a character $\kappa$ of $\kf$, consider the category $\HC(\A,K,\kappa)$ of strongly %$(K,\kappa)$-equivariant $\A$-modules, it is a full subcategory of $\HC(\A)$.
%Let $M\in \HC^\partial_\Orb(\A,K,\kappa)$.
%Now we introduce and study a certain comparability class of good $K$-stable $\A$-module filtrations on %$M$. We start by defining certain parity filtrations on finite dimensional parity $\A_{\dagger}$-modules.

\begin{defi}\label{eq:trivial_filtration}
Let $N$ be a finite dimensional $\Z/d\Z$-graded $\A_{\dagger}$-module. By the {\it trivial
filtration} on $N$ we mean the filtration given by $N_{\leqslant -1}=\{0\}, N_{\leqslant i}=\sum_{j=0}^i
N_{i\operatorname{mod} d}$ for $i=0,\ldots,d-1$.
\end{defi}

The following lemma describes easy properties of this filtration.

\begin{Lem}\label{Lem:triv_filtr_properties}
The following claims are true:
\begin{itemize}
\item The trivial filtration is a compatible HC filtration.
\item The associated graded of the trivial filtration is annihilated by $(\gr\A_\dagger)_{>0}$.
\item Suppose that $N$ is equipped with a compatible filtration such that the only nonzero components in $\gr N$ are in degrees $0,\ldots,d-1$. Then the filtration is trivial.
\end{itemize}
\end{Lem}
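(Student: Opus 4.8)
The plan is to verify each of the three bullets of Lemma \ref{Lem:triv_filtr_properties} essentially by unwinding the definitions of the trivial filtration (Definition \ref{eq:trivial_filtration}) and of a Harish-Chandra filtration, using the conditions (a), (b) imposed on $\A$.

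For the first bullet, I would first observe that the trivial filtration is compatible with the $\Z/d\Z$-grading in the sense of Remark \ref{Rem:doubled_grading}: each $N_{\leqslant i}$ is by construction a sum of graded pieces $N_{j\operatorname{mod} d}$ with $0\le j\le i$, so it is graded, and $\gr N$ in degree $i$ equals $N_{i\operatorname{mod} d}$, which indeed sits in $\gr(N_{i\operatorname{mod} d})$, so the induced $\Z/d\Z$-grading on $\gr N$ comes from the $\Z$-grading. Next I must check the HC condition: for $a\in\Walg^{-\theta}_{\leqslant k}$ and $v\in N_{\leqslant j}$ we need $av\in N_{\leqslant j+k-2}$. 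Since $N$ is $\Z/d\Z$-graded and $a$ has parity $k\bmod 2$ (the parity involution $\varsigma$ acts by $-1$ on $\gr\Walg$, and $a\in\gr\Walg_{\le k}$ maps into degree-$\le k$ part, of parity $k$), $av$ lands in degree $\equiv j+k \pmod d$; but since $N$ is finite dimensional and concentrated in degrees $0,\dots,d-1$ (as a $\Z/d\Z$-graded module, after choosing the trivial-filtration representatives), the point is that $N_{\leqslant j+k-2}$ already contains the \emph{entire} graded piece of degree $(j+k-2)\bmod d = (j+k)\bmod d$ whenever $j+k-2\ge 0$. The only worry is small $j,k$; here one uses $\Walg^{-\theta}_{\le 0}=\Walg^{-\theta}_{\le 1}=0$, which follows from \eqref{eq:Slodowy_vanish1} (i.e. $\C[S]_1=0$) together with $(\C[S])_0$ being a single copy of $\C$ fixed by $\theta$, so that the first nontrivial $-\theta$-eigenvectors occur in degree $\ge 2$. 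Thus $a\in\Walg^{-\theta}_{\le k}$ forces $k\ge 2$ whenever $a\ne 0$, and then $j+k-2\ge j\ge 0$, so $av\in N_{\le j+k-2}$ trivially (that filtration step is the full piece of the relevant residue). I would phrase this carefully; it is the only place requiring thought.

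For the second bullet, $\gr N$ with respect to the trivial filtration is just $N$ with the $\Z/d\Z$-grading refined to the $\{0,\dots,d-1\}$ grading, and the action of $\gr\A_\dagger$ on it is induced by the HC condition: an element of $(\gr\A_\dagger)_m$ with $m>0$ lifts to $\Walg^{-\theta}_{\le m}$-type data and hence, by the HC estimate proved in bullet one, shifts the filtration degree by $m-2\ge \dots$; more directly, by bullet one $a\in\Walg^{-\theta}_{\le m}$ sends $N_{\le j}$ into $N_{\le j+m-2}$, and passing to $\gr$ the class of $a$ in $(\gr\A_\dagger)_m$ sends $(\gr N)_j$ into $(\gr N)_{j+m-2}$ — but on the trivial filtration $(\gr N)_j$ is nonzero only for $0\le j\le d-1$, and $(\gr\A_\dagger)_{>0}$ needs $m\ge d$ to be nonzero by condition (b) (the restriction $(\gr\A)|_S$ has $(\gr\A)_{S,i}=0$ for $0<i<d$, and $(\gr\A_\dagger)=\C[S]$-type object inherits this). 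So for any homogeneous $a$ of positive degree, either $m\ge d$ and then $j+m-2\ge d-1$ forces the image into a zero piece (since $j\ge 1$... one must double-check $j=0,m=d$: $j+m-2=d-2<d-1$, hmm) — so more honestly I would argue directly: $(\gr\A_\dagger)_{>0}$ is spanned in degrees $\ge d$, hence annihilates $\gr N$ because $\gr N$ is concentrated in degrees $<d$ and the action raises degree. I expect this to be essentially immediate once condition (b) is in hand.

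For the third bullet, suppose $N$ carries a compatible filtration with $\gr N$ concentrated in degrees $0,\dots,d-1$. Compatibility means each $N_{\le i}$ is graded and $(\gr N)_i\subset\gr(N_{i\bmod d})$. Since $\gr N$ vanishes outside $\{0,\dots,d-1\}$, we get $N_{\le i}=N_{\le i-1}$ for $i<0$ and for $i\ge d$, forcing $N_{\le -1}=0$ and $N_{\le i}=N$ for $i\ge d-1$; and for $0\le i\le d-1$, $N_{\le i}/N_{\le i-1}=(\gr N)_i\cong N_{i\bmod d}$ as graded pieces, so by induction on $i$ one gets $N_{\le i}=\bigoplus_{j=0}^i N_{j\bmod d}=\sum_{j=0}^i N_{i\bmod d}$ in the notation of Definition \ref{eq:trivial_filtration}. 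Hence the filtration coincides with the trivial one. I would present this as a short inductive identification. Overall, the main obstacle — modest but real — is the careful bookkeeping in the first bullet showing the HC inequality holds for small filtration degrees, which I resolve using $\C[S]_1=0$ (equation \eqref{eq:Slodowy_vanish1}) to rule out $\Walg^{-\theta}$-elements of filtration degree $\le 1$.
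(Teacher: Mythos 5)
Your proof is correct and supplies exactly the details the paper leaves implicit (its entire proof reads ``(1) and (2) easily follow from (b); (3) is an easy exercise''): the key inputs are indeed $\C[S]_1=0$ forcing $\Walg^{-\theta}_{\leqslant 1}=0$, the parity bookkeeping for the borderline case, the degree gap from condition (b) for the second bullet, and the echelon-type induction for the third. The only caveat is that your identity $(j+k-2)\bmod d=(j+k)\bmod d$ holds only for $d=2$; for larger $d$ the shift in the Harish-Chandra condition must be read as $-d$ in the rescaled normalization (an ambiguity already present in the paper's own definition), after which your argument goes through unchanged.
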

\begin{proof}
(1) and (2) easily follow from (b). (3) is an easy exercise.
\end{proof}

Now we can define the canonical filtration on an object $M\in \HC^\partial_{\Orb}(\A,K,\kappa)$.
Recall that each comparability class of good $K$-stable $\A$-module filtrations on $M$ has the unique maximal element, Theorem \ref{Thm:max_filtration}. Further, if $\Orb^1_K,\ldots,\Orb^\ell_K$ be all $K$-orbits of dimension $\frac{1}{2}\dim \Orb$ in $\V(M)$, then there is a bijection between the comparability classes of good filtrations on $M$ and the $\ell$-tuples of HC filtrations compatible with the $\Z/d\Z$-gradings on
$M_{\dagger,\chi^i}$  for $\chi^i\in \Orb^i_K$, Proposition \ref{Prop:lattices_classification}.

\begin{defi}\label{defi:canon_filtration}
By a {\it normalized filtration} on $M$ we mean a good $K$-stable $\A$-module filtration such that
the corresponding filtrations on the modules $M_{\dagger,\chi^i}$ are trivial.
By the {\it canonical filtration} on $M$ we mean the unique normalized filtration such that
the corresponding lattice on $M[\hbar^{\pm 1}]$ is maximal.
\end{defi}

Our next goal is to characterize the canonical filtration in elementary terms, i.e., without using
restriction functors, proving a direct generalization of Theorem \ref{Thm:main}.

\begin{Thm}\label{Thm:canonical_charact}
For a good $K$-stable $\A$-module filtration on $M\in \HC^\partial_{\Orb}(\A,K,\kappa)$, the following two conditions are equivalent:
\begin{enumerate}
\item The filtration is canonical.
\item
\begin{itemize}
\item[(a)] The annihilator $I$ of $\gr M$ in $\gr \A$ is radical.
\item[(b)] Every nonzero divisor in $(\gr \A)/I$
is also a nonzero divisor in $\gr M$.
\item[(c)] For each $i=1,\ldots,\ell$, the graded module $(\gr M)_{\chi^i}$ is concentrated in
degrees $0,\ldots,d-1$.
\end{itemize}
\end{enumerate}
\end{Thm}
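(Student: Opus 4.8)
\textbf{Proof plan for Theorem \ref{Thm:canonical_charact}.}

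The plan is to prove the two implications by translating everything into statements about lattices in $M[\hbar^{\pm 1}]$, so that Theorem \ref{Thm:max_filtration}, Proposition \ref{Prop:lattices_classification}, and Lemma \ref{Lem:triv_filtr_properties} can be applied directly. First I would observe that conditions (a) and (b) of (2), taken together, are exactly the condition (2) of Theorem \ref{Thm:max_filtration} rephrased homologically: (b) says $\gr M$ has depth $\geqslant 1$ along $(\gr\A)/I$, and combined with the fact that (by the $K$- and $\C^\times$-stability argument used in the proof of Theorem \ref{Thm:max_filtration}) any submodule of $\gr M$ of smaller-dimensional support can be taken $K$-stable and would contradict depth $\geqslant 1$, one gets that (a)+(b) $\iff$ every nonzero $\gr\A$-submodule of $\gr M$ has support of dimension $\dim\V(M)$ $\iff$ the corresponding lattice is maximal in its comparability class. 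Here the radicality in (a) is needed to ensure that non-zero-divisors in $(\gr\A)/I$ are detected at the generic points of the components of $\V(M)$, i.e. to pass between "depth $\geqslant 1$" and "no embedded/lower-dimensional submodules". So conditions (a)+(b) are equivalent to the lattice being maximal in its comparability class.

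Next I would handle condition (c). By Proposition \ref{Prop:lattices_classification}, the comparability class of the filtration is determined by the tuple of induced HC filtrations on the $M_{\dagger,\chi^i}$, and by Definition \ref{defi:canon_filtration} the filtration is normalized precisely when each of these is the trivial filtration. The point is to show that, \emph{given} that the lattice is maximal in its class (equivalently (a)+(b) hold), condition (c) is equivalent to each induced HC filtration on $M_{\dagger,\chi^i}$ being trivial. For this I would use part (3) of Lemma \ref{Lem:triv_filtr_properties}: a compatible filtration on $M_{\dagger,\chi^i}$ whose associated graded lives only in degrees $0,\ldots,d-1$ is automatically trivial. So it suffices to identify $\gr$ of the induced filtration on $M_{\dagger,\chi^i}$ with the fiber $(\gr M)_{\chi^i}$ with its $\C^\times$-grading — more precisely, to show that the restriction functor $\bullet_{\dagger,\chi^i}$ is compatible with taking associated gradeds in the sense that $\gr(M_{\dagger,\chi^i}) \cong (\gr M)_{\chi^i}$ as graded $(\gr\A)_{\chi^i} = (\gr\A_\dagger)$-modules (using condition (b) on $\A$, which forces $(\gr\A)|_S$ to be concentrated in the right degrees, so that the grading conventions match up with the factor $d$). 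This identification should follow from the construction in Section \ref{SS_res_fun_constr}: completing at $\chi^i$, splitting off the Weyl-algebra factor $\C[[L_i,\hbar]]$, and taking $\C^\times$-finite vectors are all operations that, modulo $\hbar$, amount to restricting the coherent sheaf $\gr M$ to the transverse slice and then to the point $\chi^i$, and the Kazhdan rescaling by $d$ is exactly what turns the $\C^\times$-weights on $(\gr M)_{\chi^i}$ into the degrees of the grading on $\gr(M_{\dagger,\chi^i})$.

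Granting these two equivalences, the theorem follows: the filtration is canonical $\iff$ (it is normalized) and (its lattice is maximal) $\iff$ (each induced HC filtration on $M_{\dagger,\chi^i}$ is trivial) and ((a)+(b) hold) $\iff$ (c) holds (via Lemma \ref{Lem:triv_filtr_properties}(3) and the $\gr$-compatibility) and ((a)+(b) hold), which is precisely condition (2). One subtlety to be careful about in the "$\Leftarrow$" direction is that the implication of Lemma \ref{Lem:triv_filtr_properties}(3) requires knowing a priori that the filtration on $M_{\dagger,\chi^i}$ is HC and compatible with the $\Z/d\Z$-grading, which is guaranteed by the construction recalled in Section \ref{SS_res_fun_constr}; and in the "$\Rightarrow$" direction one uses Lemma \ref{Lem:triv_filtr_properties}(2) to see that the trivial filtration's associated graded is concentrated in degrees $0,\ldots,d-1$, which by the $\gr$-compatibility gives (c). The main obstacle, and the step deserving the most care, is the precise compatibility statement $\gr(M_{\dagger,\chi^i}) \cong (\gr M)_{\chi^i}$ including the matching of gradings: one must check that the rescaling by $d$ built into the Kazhdan action in Section \ref{SS_res_fun_constr} and the grading on $(\gr\A)|_S$ from condition (b) conspire so that "degrees $0,\ldots,d-1$" on the W-algebra side corresponds exactly to the fiber being concentrated in $\C^\times$-degrees $0,\ldots,d-1$, i.e. to condition (c) as literally stated. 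For $\A=\U$, $K\hookrightarrow G$, $\kappa=0$ we have $d=2$ and this recovers condition (c) of Theorem \ref{Thm:main}, giving that theorem as the special case.
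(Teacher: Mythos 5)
Your overall architecture is the same as the paper's: translate "canonical" into "maximal in its comparability class" plus "normalized", match maximality with (a)+(b) via Theorem \ref{Thm:max_filtration} and normalization with (c) via Lemma \ref{Lem:triv_filtr_properties} and the compatibility $\gr(M_{\dagger,\chi^i})\cong(\gr M)_{\chi^i}$. Your treatment of (c) is essentially the paper's, and you correctly flag the $\gr$-compatibility of $\bullet_{\dagger,\chi^i}$ as the technical point there.

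There is, however, a genuine gap in your first step. You assert the equivalence "(a)+(b) $\iff$ the lattice is maximal in its comparability class." The implication (a)+(b) $\Rightarrow$ maximality is correct and is all that the direction (2)$\Rightarrow$(1) needs. But (1)$\Rightarrow$(2) requires the converse, and maximality does \emph{not} imply (a): for instance $\C[x,y]/(x^2)$ over $\C[x,y]$ has $(x)$ as its only associated prime, so every nonzero submodule has full-dimensional support and (b) holds tautologically, yet the annihilator $(x^2)$ is not radical. Maximality alone yields only (b); your remark that "radicality is needed to detect non-zero-divisors at the generic points" explains why (a) is \emph{used} in the forward implication but is not an argument that maximality \emph{implies} (a). To obtain (a) one must invoke the other half of the hypothesis that the filtration is canonical, namely that it is normalized. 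This is exactly the paper's claim (*): since the induced filtrations on the $M_{\dagger,\chi^i}$ are trivial, the completion $(\gr M)^{\wedge_{\chi^i}}\cong\C[[L_i]]\otimes\gr(M_{\dagger,\chi^i})$ has radical annihilator (its associated graded is killed by $(\gr\A_{\dagger})_{>0}$), hence $\sqrt{I}/I$ is supported away from the open orbits $\Orb^i_K$; then $\sqrt{I}\cdot\gr M$ is a submodule of $\gr M$ of lower-dimensional support, hence zero by maximality, so $I$ is radical. Your proposal never brings the normalization to bear on (a), so this step must be added; with it, the rest of your plan goes through.
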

\begin{proof}
We start with (1)$\Rightarrow$(2).

We claim that
\begin{itemize}
\item[(*)] The $S(\g/\kf)$-module
$\sqrt{I}/I$ is supported away from $\bigsqcup_{i=1}^\ell \Orb^i_K$.
\end{itemize}
We write $\chi$ for $\chi^i$ and $L$ for the lagrangian subspace $L_i$.
By Lemma \ref{Lem:triv_filtr_properties}, $\gr (M_{\dagger,\chi})$ is killed by $(\gr\A_\dagger)_{>0}$. Note that $\gr M$ is an object in
$\wHC^{gr}(\A_\hbar)$, and $(\gr M_{\dagger,\chi})\cong \gr (M_{\dagger,\chi})$.
In particular, $(\gr M)^{\wedge_{\chi}}\cong \C[[L]]\otimes \gr (M_{\dagger,\chi})$.
The annihilator of this module in $(\gr\A)^{\wedge_\chi}\cong \C[[T_{\chi}\Orb]]\widehat{\otimes}(\gr\A_\dagger)^{\wedge_\chi}$ 
is the vanishing ideal of
$\C[[T_{\chi}\Orb]]\widehat{\otimes}(\gr\A_\dagger)^{\wedge_\chi}_{>0}$. So it is radical. We note that the annihilator of the completion
is the completion of the annihilator. Also the completion of the radical is the radical of the completion. (*) follows.

Now note that modulo (*), (a) is equivalent to (2) of Theorem \ref{Thm:max_filtration}.
(b) is equivalent to (2) of Theorem \ref{Thm:max_filtration} unconditionally. And (c) is
satisfied because the filtration is normalized.

To prove (2)$\Rightarrow$(1), note that (a) and (b) together imply (2) of Theorem \ref{Thm:max_filtration}. Hence the filtration on $M$ is maximal in its comparability class.
Thanks to (c) and (3) of Lemma \ref{Lem:triv_filtr_properties}, the filtration is normalized.
So, it is canonical.
\end{proof}

To finish the section we prove the following functoriality result.

\begin{Prop}\label{Prop:functoriality}
Let $M_1,M_2\in \HC^\partial_\Orb(\A,K,\kappa)$ and $\varphi:M_1\rightarrow M_2$ be a homomorphism.
Then $\varphi$ is filtration preserving with respect to the canonical filtrations.
\end{Prop}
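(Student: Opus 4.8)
The plan is to reduce the statement to the fact that the restriction functors $\bullet_{\dagger,\chi^i}$ are exact and compatible with the construction of the canonical filtration, together with the observation that the canonical filtration on a given object is, by Definition \ref{defi:canon_filtration}, intrinsically determined by the triviality of the induced filtrations on the $M_{j,\dagger,\chi^i}$ plus maximality. Concretely, I would first pass to the Rees picture: let $M_{1,\hbar}$ and $M_{2,\hbar}$ be the graded $K$-stable lattices in $M_1[\hbar^{\pm1}]$ and $M_2[\hbar^{\pm1}]$ corresponding to the canonical filtrations, and let $\varphi_\hbar: M_1[\hbar^{\pm1}]\rightarrow M_2[\hbar^{\pm1}]$ be the induced map of localized modules. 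The claim to prove is exactly $\varphi_\hbar(M_{1,\hbar})\subset M_{2,\hbar}$.

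The main step is to show that $\varphi_\hbar(M_{1,\hbar})+M_{2,\hbar}$ is comparable to $M_{2,\hbar}$ and induces the trivial filtration on each $M_{2,\dagger,\chi^i}$; since $M_{2,\hbar}$ is the \emph{maximal} lattice with the latter property in its comparability class, this forces $\varphi_\hbar(M_{1,\hbar})+M_{2,\hbar}=M_{2,\hbar}$, i.e.\ the desired containment. For comparability, note that $M'_\hbar:=\varphi_\hbar(M_{1,\hbar})+M_{2,\hbar}$ is a graded $K$-stable lattice in $M_2[\hbar^{\pm1}]$ (it contains $M_{2,\hbar}$ and is finitely generated over $\A_\hbar$ as the sum of two such modules), and by Proposition \ref{Prop:lattices_classification} it is enough to check that $(M'_\hbar)_{\dagger,\chi^i}=(M_{2,\hbar})_{\dagger,\chi^i}$ for all $i$ and that this common lattice carries the trivial filtration. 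Applying the exact functor $\bullet_{\dagger,\chi^i}$ to $M_{1,\hbar}\xrightarrow{\varphi_\hbar} M_{2,\hbar}$ and using functoriality of the restriction functor, we get a map $(M_{1,\hbar})_{\dagger,\chi^i}\rightarrow (M_{2,\hbar})_{\dagger,\chi^i}$; since both source and target carry the trivial filtration (the filtrations on $M_1,M_2$ are canonical, hence normalized), and the trivial filtration on a $\Z/d\Z$-graded module is \emph{intrinsic} (Definition \ref{eq:trivial_filtration}: $N_{\leqslant i}=\sum_{j=0}^i N_{i\bmod d}$ depends only on the grading, not on any auxiliary data), any morphism of $\Z/d\Z$-graded modules automatically preserves it. Hence the induced map on Rees modules $(M_{1,\hbar})_{\dagger,\chi^i}\rightarrow (M_{2,\hbar})_{\dagger,\chi^i}$ lands in the target, so $(\varphi_\hbar(M_{1,\hbar}))_{\dagger,\chi^i}\subset (M_{2,\hbar})_{\dagger,\chi^i}$, whence $(M'_\hbar)_{\dagger,\chi^i}=(M_{2,\hbar})_{\dagger,\chi^i}$ with the trivial filtration.

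I expect the delicate point to be justifying that $\bullet_{\dagger,\chi^i}$ applied to the \emph{map of lattices} $\varphi_\hbar\colon M_{1,\hbar}\to M_{2,\hbar}$ (rather than to the underlying objects, where we only know values up to comparability) really recovers the induced map of HC lattices on $M_{j,\dagger,\chi^i}$ compatibly with the chosen filtrations — i.e.\ that the identification in Proposition \ref{Prop:lattices_classification} of a lattice with its tuple of restrictions is functorial for ambient-module morphisms, not merely bijective on comparability classes. This follows from the construction: $\bullet_{\dagger,\chi}$ is built from completion at $\chi$, the decomposition \eqref{eq:algebra_decomposition}, and passage to the $\C^\times$-finite part, all of which are functorial; so $\varphi_\hbar$ induces a genuine morphism $M_{1,\hbar}^{\wedge_{\chi^i}}\to M_{2,\hbar}^{\wedge_{\chi^i}}$ compatible with the decompositions into $\C[[L_i,\hbar]]\widehat\otimes(\text{restriction})$, and restricting to the second tensor factor gives the map $(M_{1,\hbar})_{\dagger,\chi^i}\to(M_{2,\hbar})_{\dagger,\chi^i}$ needed above. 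Once this functoriality is in hand, the maximality characterization of the canonical filtration closes the argument, and the same reasoning shows that $\gr\varphi$ is a morphism of $\gr\A$-modules respecting the canonical gradings.
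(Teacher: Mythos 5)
Your proof is correct, but it takes a genuinely different route from the paper. The paper first reduces to the case of an \emph{automorphism} of a single object: it observes that the direct sum of the two canonical filtrations on $M_1\oplus M_2$ is again canonical (checked via the elementary characterization in Theorem \ref{Thm:canonical_charact}), and then applies the shear automorphism $(m_1,m_2)\mapsto(m_1,m_2+\varphi(m_1))$; for an automorphism the argument is soft, since an automorphism permutes lattices preserving comparability and inclusions, hence sends the maximal lattice of a class to the maximal lattice of the image class, and preservation of normalization again comes down to the fact that any morphism of finite dimensional objects of $\A_{\dagger}\operatorname{-mod}^{\Z/d\Z}$ respects the trivial filtrations. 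You instead handle an arbitrary morphism directly: you form $M'_\hbar=\varphi_\hbar(M_{1,\hbar})+M_{2,\hbar}$, use exactness and functoriality of $\bullet_{\dagger,\chi^i}$ together with the intrinsic (grading-only) nature of the trivial filtration to show $(M'_\hbar)_{\dagger,\chi^i}=(M_{2,\hbar})_{\dagger,\chi^i}$, invoke the injectivity half of Proposition \ref{Prop:lattices_classification} to get comparability, and conclude by maximality of $M_{2,\hbar}$ in its class. Your approach avoids the doubling trick and the appeal to Theorem \ref{Thm:canonical_charact}, at the cost of having to justify explicitly that the restriction functor is functorial on morphisms of lattices and that exact functors preserve images --- the delicate point you correctly identify and resolve; note that the paper's Step 1 quietly needs the same functoriality input to see that an automorphism carries normalized lattices to normalized ones, so nothing is lost. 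Both arguments ultimately rest on the same two pillars: the unique maximal lattice in a comparability class (Theorem \ref{Thm:max_filtration}) and the fact that graded morphisms preserve trivial filtrations.
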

\begin{proof}
The proof is in two steps.

{\it Step 1}. First, assume $M_i=M, i=1,2,$ and $\varphi$ is an automorphism. Note that $\varphi$
(extended to $M[\hbar^{\pm 1}]$ in an obvious way) sends comparable lattices to comparable lattices,
preserving inclusions. So, one just needs to check that a normalized filtration is sent to a normalized
filtration. This is because any morphism in the category of finite dimensional objects in $\A_{\dagger}\operatorname{-mod}^{\Z/d\Z}$ 
is compatible with the trivial filtrations.

{\it Step 2}. We deduce the general case from Step 1. Note that $M_1\oplus M_2$ carries the direct sum filtration. It is canonical, this follows for example from Theorem \ref{Thm:canonical_charact}.
Now consider the automorphism of $M_1\oplus M_2$ sending $(m_1,m_2)$ to $(m_1,m_2+\varphi(m_1))$.
Applying the claim of Step 1 finishes the proof.
\end{proof}

\end{document}